\numberwithin{equation}{section}
\newtheoremstyle{thmlemcorr}{10pt}{10pt}{\itshape}{}{\bfseries}{.}{10pt}{{\thmname{#1}\thmnumber{ #2}\thmnote{ (#3)}}}
\newtheoremstyle{thmlemcorr*}{10pt}{10pt}{\itshape}{}{\bfseries}{.}\newline{{\thmname{#1}\thmnumber{ #2}\thmnote{ (#3)}}}
\newtheoremstyle{remexample}{10pt}{10pt}{}{}{\bfseries}{.}{10pt}{{\thmname{#1}\thmnumber{ #2}\thmnote{ (#3)}}}
\newtheoremstyle{ass}{10pt}{10pt}{}{}{\bfseries}{.}{10pt}{{\thmname{#1}\thmnumber{ A#2}\thmnote{ (#3)}}}
\theoremstyle{thmlemcorr}
\newtheorem{theorem}{Theorem}
\numberwithin{theorem}{section}
\newtheorem{lemma}[theorem]{Lemma}
\newtheorem{definition}[theorem]{Definition}
\theoremstyle{thmlemcorr*}
\newtheorem{theorem*}{Theorem}
\newtheorem{lemma*}[theorem]{Lemma}
\newtheorem{corollary*}[theorem]{Corollary}
\newtheorem{proposition*}[theorem]{Proposition}
\newtheorem{problem*}[theorem]{Problem}
\newtheorem{conjecture*}[theorem]{Conjecture}
\newtheorem{definition*}[theorem]{Definition}
\theoremstyle{remexample}
\newtheorem{remark}[theorem]{Remark}
\theoremstyle{ass}
\newcommand{\Rbb}{\mathbb{R}}
\newcommand{\Tbb}{\mathbb{T}}
\newcommand{\T}{\mathbb{T}^d}
\newcommand{\bFormula}[1]{ \begin{equation} \label{#1} }
\newcommand{\eF}{ \end{equation} }
\newcommand{\Ov}[1]{\overline{#1}}
\newcommand{\vr}{\varrho}
\newcommand{\vc}[1]{{\bf #1}}
\newcommand{\Div}{{\rm div}_x}
\newcommand{\Grad}{\nabla_x}
\newcommand{\dx}{{\rm d} {x}}
\newcommand{\dt}{{\rm d} t }
\newcommand{\dxdt}{\dx \ \dt}
\newcommand{\intO}[1]{\int_{\Omega} #1 \ \dx}
\newcommand{\n}{\tilde{n}}
\DeclareMathOperator{\id}{id}
\DeclareMathOperator{\diverg}{div}
\DeclareMathOperator{\supp}{supp}
\DeclareMathOperator{\prob}{Prob}
\DeclareMathOperator{\sgn}{sgn}
\DeclareMathOperator{\mat}{Mat}
\DeclareMathOperator{\cof}{cof}
\newcommand{\norm}[1]{\|#1\|}
\newcommand{\R}{\mathbb{R}}
\newcommand{\Z}{\mathbb{Z}}
\newcommand{\eps}{\epsilon}
\newcommand{\nutt}{\nu_{(x,\tau)}}
\newcommand{\nut}{\nu_{(x,t)}}
\newcommand{\sut}{\sigma_{(x,t)}}
\newcommand{\intd}{\int_{T^d}}
\newcommand{\tplus}{\T\times\R_+}
\newcommand{\dd}{\mathrm{d}x\mathrm{d}t}
\def\XXint#1#2#3{{\setbox0=\hbox{$#1{#2#3}{\int}$}
\vcenter{\hbox{$#2#3$}}\kern-.5\wd0}}
\renewcommand{\eps}{\varepsilon}
\renewcommand{\epsilon}{\varepsilon}
\renewcommand{\phi}{\varphi}
\begin{document}
\title{Relative entropy method for measure-valued solutions in natural sciences}
\author{Tomasz D\k{e}biec \and Piotr Gwiazda \and Kamila \L{}yczek \and Agnieszka \'{S}wierczewska-Gwiazda.
}
\address{\textit{Tomasz D\k{e}biec:} Institute of Applied Mathematics and Mechanics, University of Warsaw, Banacha 2, 02-097 Warszawa, Poland}
\email{t.debiec@mimuw.edu.pl}
\address{\textit{Piotr Gwiazda:} Institute of Mathematics, Polish Academy of Sciences, \'Sniadeckich 8, 00-656 Warszawa, Poland, and Institute of Applied Mathematics and Mechanics, University of Warsaw, Banacha 2, 02-097 Warszawa, Poland}
\email{pgwiazda@mimuw.edu.pl}
\address{\textit{Kamila \L{}yczek:} Institute of Applied Mathematics and Mechanics, University of Warsaw, Banacha 2, 02-097 Warszawa, Poland}
\email{k.lyczek@mimuw.edu.pl}
\address{\textit{Agnieszka \'{S}wierczewska-Gwiazda:} Institute of Applied Mathematics and Mechanics, University of Warsaw, Banacha 2, 02-097 Warszawa, Poland}
\email{aswiercz@mimuw.edu.pl}

\maketitle

\begin{abstract}
We describe the applications of the relative entropy framework introduced in \cite{Daf}. In particular uniqueness of an entropy solution is proven for a scalar conservation law, using the notion of measure-valued entropy solutions. Further we survey recent results concerning measure-valued-strong uniqueness for a number of physical systems -- incompressible and compressible Euler equations, compressible Navier-Stokes, polyconvex elastodynamics and general hyperbolic conservation laws, as well as long-time asymptotics of the McKendrick-Von Foerster equation.
\end{abstract}

\section{Introduction}
The origins of the relative entropy method can be traced back to physics. The underlying principle behind it is the simple idea to measure in a certain way how much two evolutions of a given physical system, whose initial states are "close", differ and to investigate how this "distance" evolves in time. 
This framework, closely related to the second law of thermodynamics, is a useful tool in obtaining a variety of interesting analytical results. For instance it can be used to show uniqueness of solutions to a conservation law in the scalar case, while for many systems of equations it provides the so-called weak-strong uniqueness property, i.e. establishes uniqueness of classical solutions in a wider class of weak solutions. This application, first described by Dafermos \cite{Daf,Daf2}, will be highlighted in this article.\\
\indent Other areas where relative entropy method is found useful include stability studies, asymptotic limits and dimension reduction problems (e.g. \cite{CleoTz}, \cite{GiTz}, \cite{FeJiNo}, \cite{BeFeNo}). The method is also applied to problems arising from biology, cf. \cite{MiMiPe}, \cite{MiMiPe2}, \cite{Pert}, \cite{GwiWie}, known in this context as General Relative Entropy (GRE). It is essentially used for showing asymptotic convergence of solutions to steady-state solutions.\\
\indent On the level of weak solutions for various physical systems, including Navier-Stokes and Euler, the story seems quite complete. However recent years have delivered many new results on the level of measure-valued solutions (e.g. \cite{BrzFei}, \cite{DeStTz}, \cite{GwSwWi}, \cite{FeGSG}). This shows that even though mv solutions are considered a very weak notion of solution, not carrying much information, they do play an important role in the analysis of physical systems.
We begin our discussion on the level of the scalar conservation law
\begin{equation}\label{conservLawIntro}
\begin{split} 
\partial_t u(x,t) + \diverg_x f(u(x,t)) &= 0,\\
u(x, 0)&=u_0(x),
\end{split}
\begin{split}
&\;\; \text{in}\; \tplus,\\
&\;\; \text{in}\; \T.
\end{split}
\end{equation}
in the framework of measure-valued solutions. Here $\R_+ = [0,+\infty)$, $\T = (\R/2\pi\Z)^d$ and $u_0$ is a given initial datum. The main ideas come from Tartar \cite{Tar} and DiPerna \cite{DiP}, who defined entropy mv solutions in the language of classical Young measures, see also \cite{rokyta}.
\begin{definition}[Measure-valued solution]
A measurable measure-valued map
$$\nu: (x,t) \to \nu_{(x,t)} \in \prob(\R)$$
from $\T\times\R_+$ to the space of probability measures on $\R$ is a \emph{measure-valued solution} of ~\eqref{conservLawIntro} if
\begin{equation}
\partial_t \langle \nut, \lambda \rangle + \diverg_x \langle \nut, f(\lambda)\rangle = 0
\end{equation}
in the sense of distributions, that is 
\[\int_{\R_+} \int_{\T} \left\{\langle \nu_{(x,t)}, \lambda \rangle\partial_t\phi +\langle \nu_{(x,t)}, f(\lambda)\rangle \nabla_x\phi\right\}\ \dxdt = 0\]
for all $\phi \in \mathrm{C}_c^1(\T\times\R_+)$.
\end{definition}
Here $\langle\cdot,\cdot\rangle$ denotes the duality pairing between $\mathrm{C}_0(\R)$, the closure with respect to the supremum norm of the space of continuous functions on $\R$ with compact support, and $\mathcal{M}(\R)$, the space of signed Radon measures on $\R$, i.e.
\[\langle\mu,g(\lambda)\rangle\coloneqq\int_\R g(\lambda)\ \mathrm{d}\mu(\lambda).
\]
By measurability of a measure $\nu$ we mean the weak$^*$-measurability of the measures $\nu_{(x,t)}$, i.e. measurability of the map
\[ 
(x,t)\mapsto\left\langle\nu_{(x,t)}, g(\lambda)\right\rangle
\]
for each $g\in \mathrm{C}_0(\R)$.
Such a measure-valued solution often arises from a weakly convergent approximating sequence. The framework of Young measures is in a sense a way of immersing the initial problem into a wider space -- in this way one gains linearity at the cost of having to deal with measure spaces rather than function spaces. In other words Young measures allow to deal with the non-commutativity of weak limits with nonlinearities. Indeed, it can be shown that if $\{u_k\}$ is a sequence uniformly bounded  in $L^p$, then, along a non-relabelled subsequence, the weak limit of $\{f(u_k)\}$ can be represented by means of a parameterised family of measures, a Young measure.
\begin{lemma}[Fundamental Lemma of Classical Young Measures]
Let $\{u_k\}:\tplus\to\R$ be a sequence uniformly bounded in $L^\infty(\R_+; L^p(\T))$, i.e.
\begin{equation}\label{normboundfund}
\sup\limits_{k}\norm{u_k(\cdot, t)}_{L^p(\T)}\leq C,\hspace{0.25cm}\text{for almost every}\;t\in\R_+.
\end{equation}
Then there exists a subsequence, still denoted $\{u_k\}$ and a measurable measure-valued mapping $\nu:\tplus\to\prob(\R)$, such that for each $g\in \mathrm{C}(\R^d)$, satisfying the growth condition
\[|g(\lambda)|\leq C(1+|\lambda|^q)\hspace{0.25cm}\text{for}\;\;1\leq q < p,
\] 
the weak limit of $g(u_k(x,t))$ exists and is represented by $\left\langle\nut,g(\lambda)\right\rangle$, i.e.
\begin{equation}\label{weakstarlim}
\lim\limits_{k\to\infty}\;\int_{\tplus}g(u_k(x,t))\phi(x,t)\ \dxdt = \int_{\tplus}\langle\nut,g(\lambda)\rangle\phi(x,t)\ \dxdt,\hspace{0.25cm}\forall\phi\in L^\infty(\tplus).
\end{equation}
\end{lemma}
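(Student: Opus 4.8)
The plan is to realize each $u_k$ as an elementary Young measure and extract a weak-$*$ limit in a suitable dual space, and then to verify that this limit is genuinely probability-measure-valued and represents the weak limits of the nonlinear quantities $g(u_k)$. Writing $\Omega \coloneqq \tplus$, I associate to each $u_k$ the map $\nu^k_{(x,t)} \coloneqq \delta_{u_k(x,t)}$. Since $\mathrm{C}_0(\R)$ is separable and $\Omega$ is $\sigma$-finite, the Bochner space $L^1(\Omega; \mathrm{C}_0(\R))$ is separable and one has the duality $\big(L^1(\Omega; \mathrm{C}_0(\R))\big)^* = L^\infty_{w*}(\Omega; \Mcal(\R))$, the space of weak-$*$ measurable, essentially bounded maps into signed Radon measures. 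Each $\nu^k$ belongs to the unit ball of this dual, so by the sequential Banach--Alaoglu theorem I pass to a non-relabelled subsequence with $\nu^k \toweakstar \nu$ for some $\nu \in L^\infty_{w*}(\Omega; \Mcal(\R))$. Weak-$*$ measurability of $(x,t) \mapsto \langle \nu_{(x,t)}, g\rangle$ for every $g \in \mathrm{C}_0(\R)$ is then built into the construction.

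I would then check that $\nu_{(x,t)}$ is a probability measure for almost every $(x,t)$. Testing the convergence against products $\phi(x,t)\,g(\lambda)$ with $\phi \geq 0$ and $g \geq 0$ in $\mathrm{C}_0(\R)$, positivity is inherited by the limit, so $\nu_{(x,t)} \geq 0$. The decisive point is that the total mass equals $1$, i.e. that no mass escapes to infinity, and this is exactly where the bound \eqref{normboundfund} is used. By Chebyshev's inequality, for almost every $t$ one has $|\{x \in \T : |u_k(x,t)| > R\}| \leq C^p R^{-p}$ uniformly in $k$, so on each finite time slab, where \eqref{normboundfund} provides a uniform $L^p$ bound, the family $\{\nu^k\}$ is tight. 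Choosing cut-offs $\chi_R \in \mathrm{C}_0(\R)$ with $\ONE_{[-R,R]} \leq \chi_R \leq 1$, passing to the limit in $\langle \nu^k, \chi_R\rangle = \chi_R(u_k)$, and then letting $R \to \infty$ forces $\langle \nu_{(x,t)}, 1\rangle = 1$ almost everywhere.

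It remains to upgrade the representation \eqref{weakstarlim} from $g \in \mathrm{C}_0(\R)$ to continuous $g$ obeying $|g(\lambda)| \leq C(1 + |\lambda|^q)$ with $1 \leq q < p$. For $g \in \mathrm{C}_0(\R)$, \eqref{weakstarlim} is precisely the weak-$*$ convergence $\nu^k \toweakstar \nu$ evaluated on test functions of product form (and extended by density). For the general case I exploit the strict gap $q < p$: on the finite-measure fibre $\T$ the bound \eqref{normboundfund} makes $\{|u_k|^q\}$ bounded in $L^{p/q}$ with $p/q > 1$, so $\{g(u_k)\}$ is uniformly integrable by the de la Vall\'ee-Poussin criterion. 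Splitting $g = g\chi_R + g(1 - \chi_R)$, the truncation $g\chi_R \in \mathrm{C}_0(\R)$ is handled by the previous step, while the tail $g(1-\chi_R)$ is controlled uniformly in $k$ by uniform integrability, and the corresponding tail of the limit by the tightness already established; sending $R \to \infty$ yields \eqref{weakstarlim} for all admissible $g$.

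The main obstacle is the concentration-at-infinity step: proving that the weak-$*$ limit $\nu_{(x,t)}$ carries full mass $1$ rather than being a sub-probability measure, together with the uniform integrability needed to admit test functions of genuine polynomial growth. Both rest on the uniform $L^p$ bound \eqref{normboundfund} and, for the growth extension, on the strict inequality $q < p$; if $q$ were allowed to reach $p$ the tail estimates would degenerate and mass could be lost in the limit.
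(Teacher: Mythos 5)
The paper itself gives no proof of this lemma --- it is stated as a classical result and attributed to Tartar, DiPerna and the monograph of Ne\v{c}as--M\'alek--Rokyta--R\r{u}\v{z}i\v{c}ka --- so there is no in-paper argument to compare against. Your proposal is the standard functional-analytic proof of the fundamental theorem of Young measures (elementary Young measures $\delta_{u_k(x,t)}$ viewed in the unit ball of $\bigl(L^1(\Omega;\mathrm{C}_0(\R))\bigr)^*$, sequential Banach--Alaoglu, non-negativity and unit mass of the limit via tightness, then extension to test integrands of growth $q<p$ by uniform integrability), and it is essentially correct; this is precisely the route taken in the cited references. Two small points deserve care. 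First, since $\T\times\R_+$ has infinite measure, the asserted convergence for arbitrary $\phi\in L^\infty(\T\times\R_+)$ cannot be taken literally when $g$ grows (take $g\equiv 1$, $\phi\equiv 1$); the statement must be read with $\phi$ additionally integrable or compactly supported in time, which is exactly how your slab-by-slab argument treats it, so your proof matches the correct reading of the lemma rather than its literal wording. Second, in the final truncation step the tail of the \emph{limit} measure is not controlled by tightness alone: one needs that $\langle\nu_{(x,t)},|g|\rangle$ is finite and locally integrable, which follows from the monotone-convergence/Fatou bound $\int\phi\,\langle\nu_{(x,t)},|g|\chi_R\rangle\,\mathrm{d}x\,\mathrm{d}t\le\liminf_k\int\phi\,|g(u_k)|\,\mathrm{d}x\,\mathrm{d}t$ uniformly in $R$; with that in hand your splitting $g=g\chi_R+g(1-\chi_R)$ closes as claimed. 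Neither point is a genuine gap, only a gloss worth making explicit.
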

DiPerna \cite{DiP} and later Szepessy \cite{Szepessy} under different assumptions on the continuous flux showed the so-called averaged contraction principle, which is a crucial estimate in showing uniqueness of mv solutions, and is essentially a form of relative entropy inequality. Importantly the uniqueness result is proven under the assumption that the initial data is a Dirac delta measure. For non-atomic initial data uniqueness might fail in the class of measure-valued solutions, even in the scalar case -- and even under an entropy inequality, as it provides information only on certain moments of the solution, cf. \cite{FjKaMiTa}\\
\indent To show existence of a measure-valued entropy solution to ~\eqref{conservLawIntro} a parabolic approximate problem is considered. This generates a sequence of approximate solutions, which can be shown to be uniformly integrable. Thus one can see that there is going to be no concentration effect. However, other approximation schemes can be considered, which will not posses sufficient integrability. 
In fact we prove uniqueness in a wider class of mv solutions not necessarily corresponding to any approximation scheme. 

\indent Further, the situation differs substantially in the case of hyperbolic systems. The result introducing measure-valued solutions for the incompressible Euler describes both oscillations and concentrations, cf \cite{DiMa}, \cite{AlBo}. This is because, contrary to the scalar case, for systems there is usually only one entropy-entropy flux pair forming a companion law. The corresponding entropy inequality lacks the symmetry which is present in the scalar case.\\
\indent One then considers the so-called generalized Young measure. Here by a generalized Young measure we mean a triple $(\nu,m,\nu^\infty)$ describing oscillations, concentrations and concentration-angle respectively. The following representation result, proven in \cite{AlBo} is then true 
\begin{lemma}\label{measureschar}
Let $\{u_k\}:\tplus\to\R^n$ be a sequence bounded in $L^\infty(\R_+; L^1(\T))$, i.e.
\begin{equation}\label{normboundAB}
\sup\limits_{k}\;\norm{u_k(\cdot, t)}_{L^1(\T)}\ \leq C,\hspace{0.25cm}\text{for almost every}\;t\in\R_+.
\end{equation}
Then there exists a subsequence, still denoted $\{u_k\}$, a measurable measure-valued mapping $\nu: \tplus\to\prob(\R)$, a non-negative measure $m$ on $\tplus$ and a parameterised probability measure $\nu^\infty\in L_w^\infty(\tplus,m,\prob(\mathbb{S}^{n-1}))$ such that for each Caratheodory function $g:\tplus\times\R^n\to\R^n$
\begin{equation}\label{weakstarlimAB}
g(x,t,u_k(x,t))\xrightharpoonup{*}\langle\nu_{(x,t)},g(x,t,\lambda)\rangle+\langle\nut^\infty,g^\infty\rangle m
\end{equation}
weak$^{*}$ in the sense of measures, provided the function $g^\infty$, defined to be
\[g^\infty(x,t,\beta)\coloneqq\lim\limits_{s\to\infty}\;\;\lim\limits_{(x',t',\beta')\to(x,t,\beta)}\frac{g(x',t',s\beta')}{s},\hspace{0.25cm}\text{where}\;\beta\in\mathbb{S}^{n-1}.
\]
is of class $\mathrm{C}(\tplus\times\mathbb{S}^{n-1})$.
\end{lemma}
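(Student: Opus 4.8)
The plan is to reduce the statement to the classical theory of Young measures on a \emph{compact} set, by compactifying the target $\R^n$ with a sphere at infinity so that oscillation and concentration are captured by a single limit measure, which then disintegrates into the triple $(\nu,m,\nu^\infty)$. First I would fix the homeomorphism $\pi:\R^n\to B^n$ onto the open unit ball, $\pi(\lambda)=\lambda/(1+|\lambda|)$, whose inverse $\pi^{-1}(y)=y/(1-|y|)$ compactifies $\R^n$ to the closed ball $\overline{B^n}$, the boundary $\mathbb{S}^{n-1}$ representing directions at infinity. The crucial observation is that the admissible integrands --- continuous $g$ whose recession $g^\infty$ exists and is continuous --- correspond bijectively to $C(\tplus\times\overline{B^n})$ through
\[
H(x,t,y)\coloneqq(1-|y|)\,g\!\left(x,t,\tfrac{y}{1-|y|}\right)\ (|y|<1),\qquad H(x,t,y)\coloneqq g^\infty(x,t,y)\ (|y|=1).
\]
Indeed $1-|y|=1/(1+|\lambda|)$ under $y=\pi(\lambda)$, so $H$ is exactly the normalisation $g/(1+|\lambda|)$, and the iterated-limit definition of $g^\infty$ is precisely the condition that this normalisation extends continuously up to the boundary sphere.

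Next, working on a time-slab $\T\times[0,T]$, I would introduce the non-negative measures $\mu_k$ on the compact space $\tplus\times\overline{B^n}$ given by
\[
\langle\mu_k,\Phi\rangle\coloneqq\int_{\T\times[0,T]}\bigl(1+|u_k(x,t)|\bigr)\,\Phi\bigl(x,t,\pi(u_k(x,t))\bigr)\ \dxdt,\qquad\Phi\in C(\tplus\times\overline{B^n}).
\]
The bound \eqref{normboundAB} controls their total mass by $T(|\T|+C)$ uniformly in $k$, so weak$^*$ compactness of bounded Radon measures on a compact space, together with a diagonal extraction as $T\to\infty$, yields a subsequence with $\mu_k\xrightharpoonup{*}\mu$. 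By construction $\int g(x,t,u_k)\,\phi\ \dxdt=\langle\mu_k,H\phi\rangle$ for every admissible $g$ and every $\phi\in C_c(\tplus)$, so passing to the limit already represents the weak$^*$ limit of $g(x,t,u_k)$ against the single measure $\mu$, simultaneously for all admissible $g$.

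Finally I would disintegrate $\mu$. The restriction of $\mu$ to the open ball $\tplus\times B^n$, pushed forward by $\pi^{-1}$ and disintegrated over $\tplus$, produces the oscillation Young measure $\nut\in\prob(\R^n)$, while the restriction of $\mu$ to the boundary $\tplus\times\mathbb{S}^{n-1}$ projects onto $\tplus$ as the concentration measure $m$ and disintegrates into the angle measures $\nut^\infty\in\prob(\mathbb{S}^{n-1})$. Testing against $g\equiv1$, whose recession vanishes, shows that the interior contribution to the projection of $\mu$ is Lebesgue measure, so that $\nut$ is a genuine probability measure for a.e.\ $(x,t)$; matching the boundary contribution then reproduces exactly the splitting $\langle\nut,g\rangle+\langle\nut^\infty,g^\infty\rangle m$ asserted in \eqref{weakstarlimAB}.

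The main obstacle is not the compactness step but the disintegration bookkeeping: one must establish the weak$^*$-measurability of $(x,t)\mapsto\nut$ and of $(x,t)\mapsto\nut^\infty$ --- the latter only $m$-almost everywhere, which is exactly what places it in $L_w^\infty(\tplus,m,\prob(\mathbb{S}^{n-1}))$ --- and, most delicately, separate the Lebesgue-absolutely-continuous oscillation part of the projection of $\mu$ from its singular concentration part, so that $m$ and $\nu^\infty$ are well defined and do not depend on the test function $g$. It is precisely this independence, inherited from the fact that the single measure $\mu$ encodes all admissible $g$ at once, that makes the decomposition canonical.
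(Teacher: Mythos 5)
The paper does not actually prove this lemma---it is quoted from Alibert--Bouchitt\'e \cite{AlBo}---and your sketch is precisely the compactification argument used there (and in DiPerna--Majda): pass to the weighted measures $\bigl(1+|u_k(x,t)|\bigr)\,\delta_{\pi(u_k(x,t))}\,\dxdt$ on $\tplus\times\overline{B^n}$, extract a weak$^*$ limit, and disintegrate into oscillation, concentration and concentration-angle parts. The outline is correct; the one step to state carefully is that $\nu_{(x,t)}$ is the $(1-|y|)$-weighted (not plain) restriction of the disintegrated limit to the open ball, renormalised against Lebesgue measure---which is exactly the information your test with $g\equiv 1$ supplies, since it identifies $(1-|y|)\mu$ (supported in the open ball) as projecting onto Lebesgue measure.
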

In this article we want to show in as technically simple a way as possible the main idea of this framework. To this end we provide in Section \ref{sec2} the full proof of an averaged contraction principle for the scalar conservation law under the artificial assumption that concentration effects cannot be excluded. This highlights an interesting phenomenon that the concentration measure indeed vanishes thus concluding uniqueness on the basis of averaged contraction principle. The same observation transfers to systems, of course on the level of proving only weak(measure-valued)-strong uniqueness, cf. \cite{BrDLSz}, \cite{GwSwWi}. Interestingly, even more can be claimed, namely that the information about concentration angle does not give any essential information, cf. \cite{FeGSG}. Indeed it is enough to know that the concentration measure appearing in the weak formulation is dominated by the concentration measure coming from the energy/entropy inequality.\\
\indent Then in Sections \ref{sec3}-\ref{sec5} we survey the weak-strong uniqueness results obtained using the relative entropy method for equations of fluid dynamics (Section \ref{sec3}), polyconvex elastodynamics (Section \ref{sec4}) and general systems of conservation laws. Finally we display how the relative entropy method is used for the renewal equation of mathematical biology (Section \ref{sec6}).

%%%%%%%%%%%%%%%%%%%%%%%%%%%%%%%%%%%%%%%%%%%%%%%%%%%%%%%%%%%

\section{Measure-valued Entropy Solutions for Scalar Conservation Laws}\label{sec2}
We consider the Cauchy problem for the scalar conservation law 
\begin{equation}\label{conservLaw}
\begin{aligned} 
\partial_t u + \diverg_x f(u) &= 0,\\
u(x, 0)&=u_0(x),
\end{aligned}
\end{equation}
where $f:\R\to\R^d$ represents the flux of the quantity $u:\tplus\to\R$. We choose to work in the spatially periodic setting to avoid inessential technical issues.\\  
We introduce now the concept of entropy measure-valued solutions, which will then be used to show existence of a unique entropy solution to the problem ~\eqref{conservLaw}. The following will be standing assumptions on the flux function $f$ and the initial datum $u_0$:
%\begin{equation}
\begin{align}
&f\in \mathrm{C}(\R)\hspace{0.25cm}\text{and}\hspace{0.25cm} u_0\in L^1(\T),\\
&|f(\lambda)|\leq C(1+|\lambda|),\quad \text{for some}\; C>0. \label{sublinear}
\end{align}
%\end{equation}
%\begin{equation}\label{sublinear}
%|f(\lambda)|\leq C(1+|\lambda|),\quad \text{for some}\; C>0,
%\end{equation}
%\[\limsup_{|\lambda|\to 0}\;\frac{|f(\lambda)-f(0)|}{|\lambda|^\alpha}<\infty,\hspace{0.25cm}\text{for some}\hspace{0.25cm}\alpha\in (0,1].\]
The definition of a measure-valued entropy solution to ~\eqref{conservLaw} consists of a classical Young measure $\nu$ as well as two concentration measures $m_1$ and $m_2$. We will assume that
\begin{equation}\label{mmbound}
|m_2|(\T\times A)\leq C\;m_1(\T\times A)
\end{equation}
for any borel set $A\subset\R_+$. We will later see that for a solution arising as a limit of an approximating sequence this is guaranteed by characterization of the corresponding concentration measures.
\begin{definition}[Measure-valued entropy solution]
The triple $(\nu,m_1,m_2)$ generated by a sequence which satisfies ~\eqref{normboundAB} is called a \emph{measure-valued entropy solution with concentration} of conservation law ~\eqref{conservLaw} if
\begin{equation}\label{entropycond}
\partial_t \left(\left\langle \nu, |\lambda - k| \right\rangle + m_1\right) + \diverg_x \left(\left\langle \nu, \sgn(\lambda - k)(f(\lambda) - f(k)) \right\rangle + m_2\right) \leq 0
\end{equation}
in the sense of distributions on $\T\times\R_+$ for all $k \in \R$, and if
\begin{equation}\label{mildcts}
\lim_{T \to 0^+}\left\{ \frac{1}{T} \int_0^T \int_{\T} \langle \nu_{(x,t)}, |\lambda - u_0(x)|\rangle\ \dxdt + \frac{1}{T}\int_{\T\times [0,T]}m_1(\dd)\right\}=0.
\end{equation} 
\end{definition}
%Observe that condition ~\eqref{entropycond} is a measure version of the classical Lax entropy inequality for an $L^{\infty}$ solution of ~\eqref{conservLaw}
%\[\partial_t\eta(u) + \diverg_x(q(u))\leq 0\]
%with the entropy pair
%\[\eta(\lambda, k) = |\lambda - k|,\qquad q(\lambda,k) = |f(\lambda) - f(k)|.\]
%It was an observation by Kru\v{z}kov \cite{Kru} that these conditions are equivalent.
Notice that the entropy inequality ~\eqref{entropycond} is sufficient to guarantee that the triple $(\nu, m_1, m_2)$ satisfies also the weak formulation. To see this assume, for simplicity, that the measure $\nu$ has a bounded support $\supp\nut\in(-A,A)$. Then taking $k<-A$ we have $\lambda - k>0$ and ~\eqref{entropycond} becomes for any $\phi\geq 0$
\begin{equation*}
\begin{aligned}
0&\leq\int_{\tplus}\langle\nut,\lambda-k\rangle\partial_t\phi\ \dxdt + \int_{\tplus}\partial_t\phi\ m_1(\dd)\\
&\hspace{0.25cm}+ \int_{\tplus}\langle\nut,f(\lambda)-f(k)\rangle\nabla\phi\ \dxdt + \int_{\tplus}\nabla\phi\ m_2(\dd)\\
&=\int_{\tplus}\langle\nut,\lambda\rangle\partial_t\phi\ \dxdt + \int_{\tplus}\partial_t\phi\ m_1(\dd)\\
&\hspace{0.25cm}+ \int_{\tplus}\langle\nut,f(\lambda)\rangle\nabla\phi\ \dxdt + \int_{\tplus}\nabla\phi\ m_2(\dd).
\end{aligned}
\end{equation*}
Taking in turn $k>A$ one obtains the reversed inequality. It follows that
\[\partial_t(\langle\nut,\lambda\rangle + m_1) + \diverg_x(\langle\nut,f(\lambda)\rangle + m_2) = 0.
\]
Now we prove a uniqueness result concerning mv entropy solutions.
\begin{theorem}
Suppose $(\nu,m_1,m_2)$ and $(\sigma,\overline{m}_1, \overline{m}_2)$ are Young measure-concentration measure triples satisfying ~\eqref{entropycond} and $\nu_{(x,0)} = \sigma_{(x,0)} = \delta_{\{u_0(x)\}}$. Then there exists a function
\begin{equation}\label{regularity}
w\in L^\infty(\R_+;L^1(\T))
\end{equation}
such that
\begin{equation}
\nu_{(x,t)} = \sigma_{(x,t)} = \delta_{\{w(x,t)\}}
\end{equation}
for almost each $(x,t)\in\tplus$ and $m_i = \overline{m}_i = 0$, $i=1,2$.
\end{theorem}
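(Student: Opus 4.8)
The plan is to run a measure-valued version of Kruzhkov's doubling-of-variables argument, built on the Kruzhkov entropy pair $\eta_k(\lambda)=|\lambda-k|$, $q_k(\lambda)=\sgn(\lambda-k)(f(\lambda)-f(k))$. The central object is the comparison functional $t\mapsto\int_{\T}\langle\nu_{(x,t)}\otimes\sigma_{(x,t)},|\lambda-\mu|\rangle\,\dx$, where $\lambda$ is the variable of $\nu$ and $\mu$ that of $\sigma$. Its value at $t=0$ is $\int_{\T}|u_0(x)-u_0(x)|\,\dx=0$ because both measures start as $\delta_{\{u_0(x)\}}$, so the whole difficulty is to show that this functional, suitably augmented by the concentration measures, does not increase. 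I would exploit two symmetries of the Kruzhkov pair: first $|\lambda-\mu|=|\mu-\lambda|$, and second, since $f(\lambda)-f(\mu)\to 0$ as $\lambda\to\mu$, the flux $Q(\lambda,\mu):=\sgn(\lambda-\mu)(f(\lambda)-f(\mu))$ is a \emph{continuous} symmetric function of $(\lambda,\mu)$ of at most linear growth by \eqref{sublinear}, hence integrable against the oscillation part of the Young measures.

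First I would derive a local contraction inequality for the product measure. Writing \eqref{entropycond} for $\nu$ in variables $(x,t)$ with constant $k=\mu$, and for $\sigma$ in a second set of variables $(y,s)$ with constant $k=\lambda$, I would test the first against $\psi(x,t,y,s)\,\di\sigma_{(y,s)}(\mu)$ and the second against $\psi\,\di\nu_{(x,t)}(\lambda)$, integrate in all variables, and add. The key is to take $\psi_\ep(x,t,y,s)=\phi\big(\tfrac{x+y}{2},\tfrac{t+s}{2}\big)\,\omega_\ep(x-y)\,\theta_\ep(t-s)$ with $\phi\geq0$ and standard mollifiers, and let $\ep\to0$. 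The mollifier cancellations reduce the combined time and space derivatives to $\partial_t\phi$ and $\nabla_x\phi$ acting on the diagonal, which together with the symmetry of $\eta$ and $Q$ collapse the two inequalities into the single distributional statement
\[
\partial_t\langle\nu\otimes\sigma,|\lambda-\mu|\rangle+\diverg_x\langle\nu\otimes\sigma,Q\rangle+\partial_t(m_1+\overline{m}_1)+\diverg_x(m_2+\overline{m}_2)\leq 0
\]
on $\tplus$. The concentration terms survive intact: $\sigma_{(y,s)}$ and $\nu_{(x,t)}$ being probability measures, integrating $m_1,m_2$ (independent of $\mu$) against $\di\sigma_{(y,s)}(\mu)$ leaves them unchanged after the diagonal limit, and symmetrically for $\overline{m}_1,\overline{m}_2$.

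Next I would test this inequality against a function $\chi=\chi(t)\geq0$ of time alone. Because $\T$ is closed, the spatial divergence integrates to zero, so the flux-concentration terms $m_2,\overline{m}_2$ drop out entirely, leaving a monotonicity statement for $E(t):=\int_{\T}\langle\nu_{(x,t)}\otimes\sigma_{(x,t)},|\lambda-\mu|\rangle\,\dx+(m_1+\overline{m}_1)(\T\times[0,t])$, whose three summands are all non-negative. Using the mild initial condition \eqref{mildcts} for both triples to anchor $E(t)\to0$ as $t\to0^+$, monotonicity forces $E(t)=0$ for a.e. $t$. Hence $(m_1+\overline{m}_1)(\T\times[0,t])=0$ for all $t$, so $m_1=\overline{m}_1=0$ as non-negative measures, whence $m_2=\overline{m}_2=0$ by the domination \eqref{mmbound} -- this is exactly where that assumption is used. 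Moreover $\int_{\T}\langle\nu_{(x,t)}\otimes\sigma_{(x,t)},|\lambda-\mu|\rangle\,\dx=0$, and since $|\lambda-\mu|\geq0$ the product is supported on $\{\lambda=\mu\}$ for a.e. $(x,t)$; as $\nu$ and $\sigma$ are probability measures this is possible only if both reduce to one and the same Dirac mass, giving $\nu_{(x,t)}=\sigma_{(x,t)}=\delta_{\{w(x,t)\}}$. Finally $\langle\nu_{(x,t)},|\lambda|\rangle=|w(x,t)|$, and standard weak lower semicontinuity of the first moment against the uniform bound \eqref{normboundAB} of the generating sequence yields $w\in L^\infty(\R_+;L^1(\T))$.

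The main obstacle is the rigorous diagonal limit in the doubling step. The quantities $\langle\nu_{(x,t)},\eta_\mu\rangle$ and $\langle\nu_{(x,t)},q_\mu\rangle$ are only weak-$*$ measurable in $(x,t)$ and carry a concentration part at infinity, so I must justify the Fubini manipulations, the convergence of the mollified measure-valued fluxes at Lebesgue points of $\nu$ and $\sigma$, and the vanishing of the error terms produced when $\omega_\ep,\theta_\ep$ act on the non-smooth integrands. The continuity and linear growth of $\eta$ and $Q$, together with the fact that the concentration measures appear with a fixed sign and are independent of the doubling variable, are what make these terms controllable; carrying this out carefully -- rather than the subsequent, essentially algebraic, conclusion -- is where the real work lies.
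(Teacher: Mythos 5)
Your argument is correct in substance and reaches the same intermediate goal as the paper --- the averaged contraction inequality \eqref{averagedcontraction} for the pair $(\nu\otimes\sigma,\,m_1+\overline{m}_1,\,m_2+\overline{m}_2)$ --- but by a genuinely different route. You run the classical Kruzhkov doubling of variables: you keep two sets of space-time variables, choose $k=\mu$ and $k=\lambda$ respectively, and concentrate a product test function $\phi\big(\tfrac{x+y}{2},\tfrac{t+s}{2}\big)\omega_\ep(x-y)\theta_\ep(t-s)$ on the diagonal, so that the regularization lives entirely in the test function. The paper instead follows the DiPerna--Szepessy scheme: it mollifies the measures themselves, producing $\nu^{\ep_1}$, $\sigma^{\ep_2}$, $m_i^{\ep_1}$, $\overline{m}_i^{\ep_2}$ that satisfy a \emph{pointwise} entropy inequality \eqref{smoothentropynu}, then forms the tensor product $\nu^{\ep_1}_{(x,t)}\otimes\sigma^{\ep_2}_{(x,t)}$ at a single base point, applies the product rule and Fubini, and removes the mollification via the iterated limit $\ep_2\to0$, $\ep_1\to0$ (delegating the convergence of the tensor-product term to \cite{Szepessy}). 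The trade-off is real: the paper's version avoids the off-diagonal commutator errors that you correctly identify as the hard part of your approach, at the price of having to justify differentiating the mollified parameterised measures and the double limit; your version is closer to the scalar Kruzhkov tradition and makes the symmetry of $|\lambda-\mu|$ and $Q(\lambda,\mu)$ do the cancellation explicitly, but the verification that the $\ep$-errors vanish against merely weak-$*$ measurable, linearly growing integrands is precisely the work the paper's mollification sidesteps. From the contraction inequality onward the two proofs coincide: both test against functions of time only (legitimate since $\Tbb^d$ is closed, so the divergence terms and hence $m_2,\overline{m}_2$ drop out), both use \eqref{mildcts} applied to the bound $\langle\nu\otimes\sigma,|\lambda-\mu|\rangle\le\langle\nu,|\lambda-u_0|\rangle+\langle\sigma,|\mu-u_0|\rangle$ to anchor the functional at $t=0^+$, both deduce $m_1=\overline{m}_1=0$ (the paper via an explicit covering argument, which you should not skip, since the time marginal of $m_1$ could a priori be singular), both invoke \eqref{mmbound} to kill $m_2,\overline{m}_2$, and both read off the common Dirac structure from the vanishing of the diagonal functional. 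Your closing remark on the regularity of $w$ via the generating sequence and \eqref{normboundAB} matches the paper's.
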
 
\begin{proof}
We begin by mollifying the measures $\nu$, $\sigma$, $m_i$ and $\overline{m}_i$, $i=1,2$. Let $\eta$ be a non-negative, symmetric smooth function on $\tplus$ compactly supported in the open unit ball in $\R^d\times\R_+$
%set $\left\{(x,t)\in\R^d\times\R_+\;|\;|(x,t)|<1 \right\}$
and such that $\int_{\R^d\times\R_+}\eta(x,t) = 1$, and let $\eta^\eps(x,t) = \eps^{-(d+1)}\eta(\frac{x}{\eps},\frac{t}{\eps})$. By $\nu^\eps$ we denote the parameterised measure satisfying for all $g\in \mathrm{C}(\R)$
\[\langle \nut^\eps,g\rangle = \int_{\tplus}\eta^\eps(x-x',t-t')\langle\nu_{(x',t')},g\rangle\ \mathrm{d}x'\mathrm{d}t'\;\in \mathrm{C}^\infty(K; \mathcal{M}(\R))
\]
where $K\subset\tplus$ is a set whose $\eps$-neighbourhood is entirely contained in $\tplus$. Furthermore we observe that by virtue of the Riesz representation theorem, for each $(x,t)\in\tplus$ there are bounded measures $\partial_x\nut^\eps$ and $\partial_t\nut^\eps$ s.t. for all $g\in \mathrm{C}(\R)$
\[
\langle\partial_\alpha\nut^\eps,g\rangle = \partial_\alpha\langle\nut^\eps,g\rangle,\quad \text{for}\; \alpha\in\{t,x\},
\]
and the map $(x,t)\mapsto\langle\partial_\alpha\nut^\eps,g\rangle$ is continuous. Observe that mollifying measures $m_i$ and $\overline{m}_i$ with a regular kernel gives smooth functions $m_1^\eps,\overline{m}_1^\eps\in\mathrm{C}^\infty(K;\R_+)$ and $m_2^\eps,\overline{m}_2^\eps\in\mathrm{C}^\infty(K;\R)$. Observe also that $\nu$ and $\sigma$ have finite first moments.\\
We will now show that the regularized measures satisfy the entropy inequality. Let $V\subset\tplus$ be an open bounded set. For sufficiently small $\eps>0$ we have, for each non-negative $\phi\in \mathrm{C}_c^\infty(V)$ and $g\in \mathrm{C}(\R)$,
\begin{equation*}
\begin{aligned}
    \int_{\tplus}\left(\langle\nut,g\rangle\right)\partial_t(\phi*\eta^\eps)\ \dxdt + \int_{\tplus}\partial_t(\phi*\eta^\eps)\ m_1(\dd) \\
     =-\int_{\tplus}\left(\langle\partial_t\nut^\eps,g\rangle + \partial_t m_1^\eps\right)\phi(x,t)\ \dxdt,
\end{aligned}
\end{equation*}
and analogously for the spatial derivative. Hence, choosing in turn $g(\lambda) = |\lambda-\mu|$ and $g(\lambda) = \sgn(\lambda-\mu)(f(\lambda)-f(\mu))$ and using ~\eqref{entropycond}
\begin{equation}
\begin{aligned}
\int_{\tplus}&\left(\left\langle\partial_t\nut^\eps,|\lambda-\mu|\right\rangle+\partial_t m_1^\eps + \left\langle\partial_x\nut^\eps,\sgn(\lambda-\mu)(f(\lambda)-f(\mu))\right\rangle +\partial_x m_2^\eps \right)\phi(x,t)\ \dxdt\\
&\hspace{-0.5cm}=-\int_{\tplus}\left[\langle\nut,|\lambda-\mu|\rangle\partial_t(\phi*\eta^\eps) + \langle\nut,q(\lambda,\mu)\rangle\nabla(\phi*\eta^\eps)\right]\ \dxdt \\
&-\int_{\tplus}\partial_t(\phi*\eta^\eps)\ m_1(\dd) - \int_{\tplus}\nabla(\phi*\eta^\eps)\ m_2(\dd)\\
&\hspace{-0.5cm}\leq 0,
\end{aligned}
\end{equation}
where we denote $q(\lambda, \mu)\coloneqq\sgn(\lambda-\mu)(f(\lambda)-f(\mu))$ for brevity.
Therefore
\begin{equation}\label{smoothentropynu}
\langle\partial_t\nut^\eps,|\lambda-\mu|\rangle+\partial_t m_1^{\eps} + \langle\partial_x\nut^\eps, \sgn(\lambda-\mu)(f(\lambda)-f(\mu))\rangle+\partial_x m_2^{\eps} \leq0
\end{equation}
for all $\mu\in\R$ and $(x,t)\in V$. Symmetrically it can be seen that the triple $(\sigma^\eps, \overline{m}_1^\eps, \overline{m}_2^\eps)$ satisfies ~\eqref{entropycond} for all $\lambda\in\R$. Next we observe that the function $q$ is continuous on $\R^2$ and it has sublinear growth due to growth conditions ~\eqref{sublinear} on $f$. It follows that the maps $\mu\mapsto\int q(\lambda,\mu)\ \mathrm{d}\nut^\eps(\lambda)$ and $\lambda\mapsto\int q(\lambda,\mu)\ \mathrm{d}\sut^\eps(\mu)$ are continuous (by virtue of the dominated convergence theorem). Let now $\eps_1$ correspond to the smoothing of $(\nu,m_1,m_2)$ and $\eps_2$ correspond to smoothing of $(\sigma,\overline{m}_1,\overline{m}_2)$. We can then compute
\begin{equation}
\begin{aligned}
\diverg_x&\left(\left\langle\nut^{\eps_1}\otimes\sut^{\eps_2},q(\lambda,\mu)\right\rangle+m_2^{\eps_1}+\overline{m}_2^{\eps_2}\right)\\
& = \int_{\R}\diverg_x\left(\int_{\R}q(\lambda,\mu)\ \mathrm{d}\nut^{\eps_1}\right)\ \mathrm{d}\sut^{\eps_2} + \int_{\R}\int_{\R}q(\lambda,\mu)\ \mathrm{d}\nut^{\eps_1}\ \mathrm{d}(\partial_x\sut^{\eps_2})+\partial_x m_2^{\eps_1}+\partial_x\overline{m}_2^{\eps_2}\\
&= \int_{\R}\langle\partial_x\nut^{\eps_1},q(\lambda,\mu)\rangle\ \mathrm{d}\sut^{\eps_2} + \int_{\R}\langle\partial_x\sut^{\eps_2},q(\lambda,\mu)\rangle\ \mathrm{d}\nut^{\eps_1}+\partial_x m_2^{\eps_1}+\partial_x\overline{m}_2^{\eps_2}
\end{aligned}
\end{equation}
where we have used Fubini's theorem in the last line. The tensor product $\nut\otimes\sut$ is defined as the product measure $\mathrm{d}\nut(\lambda)\mathrm{d}\sut(\mu)$. Similarly
\begin{equation}
\begin{aligned}
\partial_t&\left(\left\langle\nut^{\eps_1}\otimes\sut^{\eps_2},|\lambda-\mu|\right\rangle+m_1^{\eps_1}+\overline{m}_1^{\eps_2}\right)\\
&= \int_{\R}\langle\partial_t\nut^{\eps_1},|\lambda-\mu|\rangle\ \mathrm{d}\sut^{\eps_2} + \int_{\R}\langle\partial_t\sut^{\eps_2},|\lambda-\mu|\rangle\ \mathrm{d}\nut^{\eps_1}+\partial_t m_1^{\eps_1}+\partial_t\overline{m}_1^{\eps_2}.
\end{aligned}
\end{equation}
Consequently using ~\eqref{smoothentropynu}
\begin{equation}
\begin{aligned}
\int_{\tplus}&\left(\langle\nut^{\eps_1}\otimes\sut^{\eps_2},|\lambda-\mu|\rangle + m_1^{\eps_1}+\overline{m}_1^{\eps_2}\right)\partial_t\phi\ \dxdt\\
&+ \int_{\tplus}\left(\langle\nut^{\eps_1}\otimes\sut^{\eps_2},q(\lambda,\mu)\rangle + m_2^{\eps_1}+\overline{m}_2^{\eps_2}\right)\nabla\phi\ \dxdt\\
&\hspace{-0.5cm}=-\int_{\tplus}\phi\int_{\R}\left\{\left(\langle\partial_t\nut^{\eps_1},|\lambda-\mu|\rangle + \langle\partial_x\nut^{\eps_1},q(\lambda,\mu)\rangle\right)\ \mathrm{d}\sut^{\eps_2}-\partial_t m_1^{\eps_1}-\partial_x  m_2^{\eps_1}\right\}\ \dxdt \\
&-\int_{\tplus}\phi\int_{\R}\left\{\left(\langle\partial_t\sut^{\eps_2},|\lambda-\mu|\rangle + \langle\partial_x\sut^{\eps_2},q(\lambda,\mu)\rangle\right)\ \mathrm{d}\nut^{\eps_1}-\partial_t \overline{m}_1^{\eps_2}-\partial_x  \overline{m}_2^{\eps_2}\right\}\ \dxdt\\
&\hspace{-0.5cm}\geq 0.
\end{aligned}
\end{equation}
This holds for any $\phi\in \mathrm{C}_c^\infty(V)$, thus establishing the following inequality in $\mathcal{D}'(\tplus)$ 
\begin{equation}\label{smoothentropy}
\partial_t\left(\left\langle\nut^{\eps_1}\otimes\sut^{\eps_2},|\lambda-\mu|\right\rangle + m_1^{\eps_1} + \overline{m}_1^{\eps_2}\right) + \diverg_x\left(\left\langle\nut^{\eps_1}\otimes\sut^{\eps_2},q(\lambda,\mu)\right\rangle + m_2^{\eps_1} + \overline{m}_2^{\eps_2}\right)\leq 0.
\end{equation}
Observe that the function $\varphi(x,t) = \psi(x)\phi(t)$ with $\psi\equiv 1$ and $\phi\in \mathrm{C}_c^\infty(\R_+)$ is an admissible test function, since $\T$ is a compact manifold without boundary. Testing ~\eqref{smoothentropy} with such a function yields
\begin{equation}
\int_{\tplus}\left(\langle\nut^{\eps_1}\otimes\sut^{\eps_2},|\lambda-\mu|\rangle + m_1^{\eps_1} + \overline{m}_1^{\eps_2}\right)\partial_t\phi\ \dxdt \geq 0.
\end{equation}
The argument presented in \cite{Szepessy} establishes the limit
\[\lim\limits_{\eps_1\to 0}\:\lim\limits_{\eps_2\to 0}\ \int_{\tplus}\left\langle\nut^{\eps_1}\otimes\sut^{\eps_2},|\lambda-\mu|\right\rangle\partial_t\phi\ \dxdt = \int_{\tplus}\left\langle\nut\otimes\sut,|\lambda-\mu|\right\rangle\ \dxdt.
\]
Clearly we have the convergence
\[\int_{\tplus}m_1^{\eps_1}\partial_t\phi\ \dxdt = \int_{\tplus}\partial_t(\phi*\eta^{\eps_1})\ m_1(\dd)\xrightarrow{\eps_1\to 0}\int_{\tplus}\partial_t\phi\ m_1(\dd).
\]
Similarly $\int\overline{m}_1^{\eps_2}\partial_t\phi \rightarrow\int\partial_t\phi\ \overline{m}_1(\dd)$.
We thus have for any non-negative $\phi\in \mathrm{C}_c^\infty(\R_+)$
\begin{equation}\label{avcontraction}
\int_{\tplus}\left\langle\nut\otimes\sut,|\lambda-\mu|\right\rangle\partial_t\phi(t)\ \dxdt +\int_{\tplus}\partial_t\phi\ m_1(\dd) + \int_{\tplus}\partial_t\phi\ \overline{m}_1(\dd)\geq 0.
\end{equation}
Let now 
\begin{equation}\label{defA}
A(t) \coloneqq \int_{\T}\langle\nut\otimes\sut,|\lambda-\mu|\rangle\ \dx.
\end{equation}
Then $A$ is a non-negative locally integrable function on $\R_+$. Let $\tau>0$ be a fixed Lebesgue point of $A$ and define $\phi:\R_+\to\R_+$ by
\[\phi(t) = \left(\frac{t}{\eps}-1\right)\chi_{(\eps,2\eps)}(t) + \chi_{(2\eps,\tau)}(t) + \left(-\frac{t}{2\eps}+\frac{\tau+\eps}{2\eps}\right)\chi_{(\tau-\eps,\tau+\eps)}(t).\]
We denote by $\phi^\delta$, $\delta<\eps$, a mollification of $\phi$ by a smooth kernel. Likewise $m_1^\delta$ and $\overline{m}_1^\delta$ denote mollifications of measures $m_1$ and $\overline{m}_1$ with respect to the time variable. Notice that $\partial_t\phi$ is supported only in the intervals $(\eps,2\eps)$ and $(\tau-\eps,\tau+\eps)$, where it is equal to $\frac{1}{\eps}$ and $-\frac{1}{2\eps}$ respectively. Hence using the following inequalities
\[m_1(\T\times(a,b)) \leq \liminf\limits_{\delta \to 0^{+}}\int_{\R_+}\Phi\ m_1^{\delta}(\dd) \leq \limsup\limits_{\delta \to 0^{+}} \int_{\R_+} \Phi\ m_1^{\delta}(\dd) \leq  m_1(\T\times[a,b]),
\]
where $\Phi = \chi_{(a,b)}$ a.e. with respect to the Lebesgue measure, we obtain using also ~\eqref{avcontraction}
\begin{equation}\label{approxA}
\begin{aligned}
\frac{1}{\eps}&\int_\eps^{2\eps}A(t)\ \dt + \frac{1}{\eps}\int_{\T\times[\eps,2\eps]}\left(m_1(\dd)+\overline{m}_1(\dd)\right)\\
&\hspace{0.5cm}-\frac{1}{2\eps}\int_{\tau-\eps}^{\tau+\eps}A(t)\ \dt - \frac{1}{2\eps}\int_{\T\times(\tau-\eps,\tau+\eps)}\left(m_1(\dd)+\overline{m}_1(\dd)\right) \\
&\geq \lim\limits_{\delta\to 0}\;\int_{\tplus}A(t)\partial_t\phi^\delta\ \dt + \int_{\tplus}\partial_t\phi^\delta\left(m_1(\dd)+\overline{m}_1(\dd)\right) \\
&\geq 0.
\end{aligned}
\end{equation}
This implies that
\begin{equation}\label{approxAbound}
\begin{aligned}
\frac{1}{2\eps}&\int_{\tau-\eps}^{\tau+\eps}A(t)\ \dt + \frac{1}{2\eps}\int_{\T\times(\tau-\eps,\tau+\eps)}\left(m_1(\dd)+\overline{m}_1(\dd)\right) \\
&\leq \frac{1}{\eps}\int_\eps^{2\eps}A(t)\ \dt + \frac{1}{\eps}\int_{\T\times[\eps,2\eps]}\left(m_1(\dd)+\overline{m}_1(\dd)\right) \\
&\leq \frac{1}{\eps}\int_0^{2\eps}A(t)\ \dt + \frac{1}{\eps}\int_{\T\times[0,2\eps]}\left(m_1(\dd)+\overline{m}_1(\dd)\right).
\end{aligned}
\end{equation}
Furthermore
\begin{equation*}
\begin{aligned}
A(t)&\leq\int_{\T}\langle\nut\otimes\sut,|\lambda-u_0(x)|\rangle\ \dx + \int_{\T}\langle\nut\otimes\sut,|\mu-u_0(x)|\rangle\ \dx\\
& =\int_{\T}\langle\nut,|\lambda-u_0(x)|\rangle\ \dx + \int_{\T}\langle\sut,|\mu-u_0(x)|\rangle\ \dx.
%A(t)& + \int_{\tplus}\left(m_1(\dd)+\overline{m}_1(\dd)\right)\\
%&\leq \int_{\T}\langle\nut\otimes\sut,|\lambda-u_0(x)|\rangle\ \dx + \int_{\tplus}m_1(\dd) \\
%&\hspace{0.5cm}+\int_{\T}\langle\nut\otimes\sut,|\mu-u_0(x)|\rangle\ \dx + \int_{\tplus}\overline{m}_1(\dd)\\ 
%& = \left\{\int_{\T}\langle\nut,|\lambda-u_0(x)|\rangle\ \dx + \int_{\tplus}m_1(\dd)\right\} \\
%&\hspace{0.5cm}+\left\{\int_{\T}\langle\sut,|\mu-u_0(x)|\rangle\ \dx + \int_{\tplus}\overline{m}_1(\dd)\right\}\\
\end{aligned}
\end{equation*}
Consequently by the initial condition ~\eqref{mildcts} we have
\[
\lim\limits_{T\to 0^+}\left\{\frac{1}{T}\int_0^T A(t)\ \dt + \frac{1}{T}\int_{\T\times[0,T]}m_1(\dd) + \frac{1}{T}\int_{\T\times[0,T]}\overline{m}_1(\dd)\right\} = 0.
\]
It follows that the right hand side of ~\eqref{approxAbound} converges to zero as $\eps\to0$. Therefore, since $A$ and the measures $m_1$, $\overline{m}_1$ are non-negative, we see that
\begin{equation}\label{limitsA}
\lim\limits_{\eps\to 0}\;\frac{1}{2\eps}\int_{\tau-\eps}^{\tau+\eps}A(t)\ \dt = 0,\hspace{0.25cm}\text{and}\hspace{0.25cm}\lim\limits_{\eps\to 0}\;\frac{1}{2\eps}\int_{\T\times(\tau-\eps,\tau+\eps)}\left(m_1(\dd)+\overline{m}_1(\dd)\right) = 0.
\end{equation}
Hence, by Lebesgue differentiation theorem, it follows that $A(\tau) = 0$ for a.a. $\tau\in\R_+$.
This implies that
\begin{equation}\label{supports}
\int_{\R\times\R}|\lambda-\mu|\ \mathrm{d}\nut(\lambda)\mathrm{d}\sut(\mu) = 0.
\end{equation}
It can be easily seen from ~\eqref{supports} that the measures $\nut$ and $\sut$ have a common support consisting of a point ${w(x,t)}$ for a.e. $(x,t)\in\tplus$. Further, the second limit in ~\eqref{limitsA} implies that $m_1 = \overline{m}_1 = 0$. To see this consider an arbitrary time interval $[a,b]\subset\R_+$. It can be covered with a finite number of overlapping open intervals of radius $\eps$, denoted $B(t_i,\eps)$, so that
\[\sum\limits_{i\in J}\mathcal{L}^1(B(t_i,\eps))\leq 2(b-a),
\]
where $\mathcal{L}^1$ denotes the $1$-dimensional Lebesgue measure. From ~\eqref{limitsA} we have that
\[
m_1(\T\times B(\tau,\eps))\leq C(\eps)\mathcal{L}^{d+1}(\T\times B(\tau,\eps))\]
for any ball, where $C(\eps)\to 0$ as $\eps\to 0$. Hence
\[m_1(\T\times[a,b])\leq C(\eps)\sum\limits_{i\in J}\mathcal{L}^{d+1}(\T\times B(t_i,\eps))\leq 2C(\eps)\mathcal{L}^{d}(\T)(b-a)\xrightarrow{\eps\to 0} 0.
\]
Hence $m_1(\T\times [a,b]) = 0$ for any $a<b$. Similarly $\overline{m}_1 = 0$. 
This in turn implies $m_2 = \overline{m}_2=0$ as a consequence of ~\eqref{mmbound}.\\
Finally the claimed regularity of $w$ follows from the convergence
\[
\int_{\tplus}g(w_k(x,t))\cdot\phi(x,t)\ \dxdt \xrightarrow{k\to\infty}\int_{\tplus}\underbrace{\langle\nut,g\rangle}_{\text{$g(w(x,t))$}}\phi(x,t)\ \dxdt
\]
and the boundedness of the sequence $u_k$.
\end{proof}
\begin{remark}
One can also pass to the limit in the divergence term of ~\eqref{smoothentropy} to obtain the following averaged contraction principle
\begin{equation}\label{averagedcontraction}
\partial_t\left(\left\langle\nut\otimes\sut,|\lambda-\mu|\right\rangle + m_1 + \overline{m}_1\right) + \diverg_x\left(\left\langle\nut\otimes\sut, q(\lambda,\mu)\right\rangle + m_2 + \overline{m}_2\right)\leq 0.
\end{equation}
The theorem is extendible to the problem defined on $\R^d\times\R_+$ rather than $\tplus$. An approximation argument is needed in that case since the constant unit function no longer belongs to the class $\mathrm{C}_c^\infty(\R^d)$. 
\end{remark}
We have thus established uniqueness in the class of mv entropy solutions. We conclude this section with proving existence of a unique entropy solution.
First we observe that recession functions for  $\eta = |\lambda-\mu|$ and $q = \sgn(\lambda-\mu)(f(\lambda)-f(\mu))$ can be easily described as follows:
\begin{equation}\label{recessioneta}
\eta^\infty = \lim\limits_{\lambda\to\pm\infty}\frac{|\lambda v-k|}{|\lambda|} = 1
\end{equation}
where $v\in \mathbb{S}^0$, and
\begin{equation}\label{recessionq}
q^\infty(v) =
\begin{cases}
f^\infty(1), & \text{for}\; v=1,\\
-f^\infty(-1), & \text{for}\; v=-1,
\end{cases}
\end{equation}
where we define 
\begin{equation}\label{finfty}
f^\infty(1)\coloneqq\lim\limits_{\lambda\to+\infty}\frac{f(\lambda)}{\lambda}\hspace{0.25cm}\text{and}\hspace{0.25cm}f^\infty(-1)\coloneqq\lim\limits_{\lambda\to-\infty}\frac{f(-\lambda)}{\lambda}
\end{equation}
Notice that in fact the assumption on existence of the limits in ~\eqref{finfty} together with continuity of the flux implies condition ~\eqref{sublinear}.
\begin{theorem}
There exists a unique entropy solution $w$ to ~\eqref{conservLaw} such that
\begin{equation}
\norm{w(\cdot,t)}_{L^1(\T)}\leq\norm{u_0}_{L^1(\T)}
\end{equation}
for a.e. $t\in\R_+$.
\end{theorem}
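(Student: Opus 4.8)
The plan is to reduce existence to the construction of a single measure-valued entropy solution, and then to invoke the uniqueness theorem established above to conclude that this solution is generated by an honest function. Uniqueness of the entropy solution is then essentially free: if $w_1$ and $w_2$ are two entropy solutions of \eqref{conservLaw} with datum $u_0$, each generates the Dirac triple $(\delta_{w_i},0,0)$ satisfying \eqref{entropycond} and \eqref{mildcts}, and applying the uniqueness theorem to these two triples forces $\delta_{w_1}=\delta_{w_2}$, i.e. $w_1=w_2$ almost everywhere. Thus the entire content is existence together with the $L^1$ bound.

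To construct a measure-valued entropy solution I would use parabolic regularization. Fix $u_0^\ep\in\mathrm{C}^\infty(\T)$ with $u_0^\ep\to u_0$ in $L^1(\T)$ and $\norm{u_0^\ep}_{L^1(\T)}\leq\norm{u_0}_{L^1(\T)}$, and let $u^\ep$ solve
\begin{equation*}
\partial_t u^\ep + \diverg_x f(u^\ep) = \ep\,\Delta u^\ep, \qquad u^\ep(\cdot,0)=u_0^\ep .
\end{equation*}
The classical $L^1$-contraction estimate for viscous scalar conservation laws yields $\norm{u^\ep(\cdot,t)}_{L^1(\T)}\leq\norm{u_0^\ep}_{L^1(\T)}\leq\norm{u_0}_{L^1(\T)}$ for a.e. $t$, so $\{u^\ep\}$ satisfies the uniform bound \eqref{normboundAB}. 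Moreover, for each $k\in\R$ the Kruzhkov entropy pairs give
\begin{equation*}
\partial_t |u^\ep-k| + \diverg_x q(u^\ep,k) \leq \ep\,\Delta|u^\ep-k|
\end{equation*}
in the sense of distributions, where $q(\lambda,k)=\sgn(\lambda-k)(f(\lambda)-f(k))$.

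Next I would apply the representation Lemma~\ref{measureschar} to extract a subsequence and a generalized Young measure $(\nu,m,\nu^\infty)$. Using the recession functions computed in \eqref{recessioneta}--\eqref{finfty}, namely $\eta^\infty\equiv 1$ and the bounded $q^\infty$ of \eqref{recessionq}, the weak-$*$ limits of $|u^\ep-k|$ and $q(u^\ep,k)$ are represented as $\langle\nu,|\lambda-k|\rangle+m_1$ and $\langle\nu,q(\lambda,k)\rangle+m_2$, where the ($k$-independent) concentration measures are $m_1:=\langle\nu^\infty,\eta^\infty\rangle m = m$ and $m_2:=\langle\nu^\infty,q^\infty\rangle m$. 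Since $|q^\infty|\leq C:=\max\{|f^\infty(1)|,|f^\infty(-1)|\}$ while $\eta^\infty\equiv 1$, this immediately gives the domination \eqref{mmbound}. Passing to the limit $\ep\to 0$ in the viscous entropy inequality—the right-hand side vanishes because $\ep\int|u^\ep-k|\,\Delta\phi\,\dxdt\to 0$ by the uniform $L^1$ bound—then produces exactly \eqref{entropycond} for the triple $(\nu,m_1,m_2)$. The initial condition \eqref{mildcts} is obtained from the $L^1$-convergence $u_0^\ep\to u_0$ together with the strong $L^1$-continuity of $u^\ep$ at $t=0$, controlling both the oscillation term and the initial mass of $m_1$. (In fact the parabolic scheme can be shown to be uniformly integrable, so concentrations do not even arise; the recession computation above shows the framework is robust regardless.)

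Having produced a measure-valued entropy solution with datum $u_0$, I apply the uniqueness theorem (taking $\sigma=\nu$) to conclude $\nu_{(x,t)}=\delta_{w(x,t)}$ for some $w\in L^\infty(\R_+;L^1(\T))$ and $m_1=m_2=0$; this $w$ is the sought entropy solution, with uniqueness as noted in the first paragraph. Finally, the bound $\norm{w(\cdot,t)}_{L^1(\T)}\leq\norm{u_0}_{L^1(\T)}$ follows by lower semicontinuity: since $m_1=0$ there is no concentration, so $u^\ep(\cdot,t)\toweak w(\cdot,t)$ in $L^1(\T)$, and weak lower semicontinuity of the $L^1$-norm against the uniform estimate $\norm{u^\ep(\cdot,t)}_{L^1(\T)}\leq\norm{u_0}_{L^1(\T)}$ gives the claim. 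I expect the main obstacle to be the limit passage in the entropy inequality under only the $L^1$ bound \eqref{normboundAB}: one must ensure that concentrations do not corrupt \eqref{entropycond}, which is precisely what the recession identities \eqref{recessioneta}--\eqref{recessionq} and the resulting structural bound \eqref{mmbound} are designed to control; verifying the initial trace \eqref{mildcts} in the presence of a possible initial-layer concentration is the secondary delicate point.
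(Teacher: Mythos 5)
Your proposal follows essentially the same route as the paper: viscous regularization, extraction of a generalized Young measure via Lemma~\ref{measureschar}, identification of the concentration measures through the recession functions \eqref{recessioneta}--\eqref{recessionq} to obtain \eqref{mmbound}, passage to the limit in the viscous Kruzhkov inequality, and an appeal to the uniqueness theorem to collapse the Young measure to a Dirac mass. The only substantive point you overlook is that the standing assumptions grant merely $f\in \mathrm{C}(\R)$ with sublinear growth, so the well-posedness of your viscous problem $\partial_t u^\ep + \diverg_x f(u^\ep) = \ep\Delta u^\ep$ with the \emph{unmollified} flux is not available from standard quasilinear parabolic theory, which requires at least local Lipschitz regularity of $f$; your phrase ``let $u^\ep$ solve'' therefore presupposes a solution you have not constructed. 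The paper circumvents this by additionally mollifying the flux, $f_n := f*\eta^{\eps_n}$, with $\eps_n$ calibrated as in \eqref{nbound} so that the error $f_n - f$ is $O(1/n)$ uniformly on the relevant range and disappears in the limit of the entropy inequality. This is an easy repair, but as written your construction has a gap at its very first step; the remainder of your argument (the recession-function computation, the domination \eqref{mmbound}, the limit passage, and the final application of the uniqueness theorem) matches the paper's proof.
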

\begin{proof}
We adapt the proof presented in ~\cite{Szepessy}. First we prove existence of a mv entropy solution $(\nu,m_1,m_2)$ - to this end we consider the following parabolic regularization
\begin{equation}\label{regularproblem}
\begin{aligned}
\partial_t w_n+\diverg_x f_n(w_n) &= \frac{1}{n}\Delta w_n,\hspace{0.2cm} \text{on}\; \tplus,\\
w_n(x,0) &= u_0^n(x),\hspace{0.25cm} \text{on}\; \T
\end{aligned}
\end{equation}
where $u_0^n\in \mathrm{C}_c^\infty(\T)$ and $u_0^n\rightarrow u_0$ in $L^1(\T)$ and $f_n\coloneqq f*\eta^{\eps_n}$. The sequence $\eps_n$ is chosen so that for $|z|\leq\norm{u_0}_{L^\infty}$ we have
\begin{equation}\label{nbound}
\sup\limits_{|h|\leq\eps_n}|f(z+h)-f(z)|\leq\frac{1}{n}.
\end{equation}
This is possible because of the assumptions on asymptotic behaviour ~\eqref{finfty} of the flux.
Problem ~\eqref{regularproblem} has a unique smooth solution $w_n$ satisfying the bound $\norm{w_n(\cdot,t)}_{L^1(\T)}\leq\norm{u_0}_{L^1(\T)}$ for a.e. $t$. Since the sequence $\{w_n\}$ is bounded in $L^\infty(\R_+;L^1(\T))$, there is, after passing to a subsequence, a triple $(\nu,m_1,m_2)$ of associated Young measure and concentration measures. We then have by ~\eqref{nbound}
\begin{equation}
\begin{aligned}
\lim\limits_{n\to\infty}&\;\int_{\tplus}\sgn(w_n-k)(f_n(w_n)-f_n(k))\nabla\phi\ \dxdt\\
&=\lim\limits_{n\to\infty}\;\int_{\tplus}\sgn(w_n-k)\left[(f(w_n)-f(k)) + (f_n(w_n)-f(w_n)) + (f_n(k) - f(k)) \right]\nabla\phi\ \dxdt\\
&=\lim\limits_{n\to\infty}\;\int_{\tplus}\sgn(w_n-k)(f(w_n)-f(k))\nabla\phi\\
&=\int_{\tplus}\left(\langle\nut, \sgn(\lambda-k)(f(\lambda)-f(k))\rangle+m_2\right)\nabla\phi\ \dxdt,
\end{aligned}
\end{equation}
where the last equality follows from Lemma\ref{measureschar}.
%In the above calculation we have used the decomposition $f_n = (f_n-f) + f$. We then observe that $f_n-f\in \mathrm{C}_b(\R)$ and $\norm{f_n-f}_{\infty}\xrightarrow{n\to\infty} 0$.
Moreover, by virtue of the same lemma, we have
\begin{equation}
\lim\limits_{n\to\infty}\;\int_{\tplus}|w_n-k|\partial_t\phi\ \dxdt = \int_{\tplus}\left(\langle\nut,|\lambda-k|\rangle+m_1\right)\partial_t\phi\ \dxdt.
\end{equation}
We observe that from Lemma\ref{measureschar} and ~\eqref{recessioneta}-~\eqref{finfty}, the following characterization of the concentration measures holds true
\[
m_2 = \left(\int_{\mathbb{S}^0}q^\infty(v)\ \mathrm{d}\nut^\infty\right)m_1.
\]
Clearly this implies the bound ~\eqref{mmbound}. We remark that the measures $m_1$ and $m_2$ are common for each choice of entropy-entropy flux pair (i.e. for each choice of $k\in\R$), because the associated recession functions do not depend on $k$.\\
Let now $\sgn_\delta$, $\theta_\delta$ and $q_\delta$ be regularizations of the functions $z\mapsto\sgn(z)$, $|z-k|$ and $q(z,k)$ respectively, for $k\in\R$. Multiplying ~\eqref{regularproblem} by $\phi\sgn_\delta(w_n-k)$, integrating in time and space and integrating by parts yields
\begin{equation}\label{regulartested}
\begin{aligned}
\int_{\tplus}&\left(\theta_\delta(w_n,k)\partial_t\phi + q_\delta(w_n,k)\nabla\phi\right)\ \dxdt\\
&= \frac{1}{n}\int_{\tplus}|\nabla w_n|^2\sgn'_\delta(w_n-k)\phi\ \dxdt - \frac{1}{n}\int_{\tplus}\theta_\delta(w_n,k)\Delta\phi\ \dxdt.\\
\end{aligned}
\end{equation}
The right hand side of the last identity can be bounded from below by 
\begin{equation}\label{boundontested}
-\frac{C(\phi)}{n}\left(\norm{w_n}_{L^\infty(R_+;L^1(\T))} + 1 \right).
\end{equation}
Combining ~\eqref{regulartested} and ~\eqref{boundontested} and passing with $\delta$ to zero we see that
\[
\int_{\tplus}|w_n-k|\partial_t\phi + \sgn(w_n-k)(f_n(w_n)-f_n(k))\nabla\phi\ \dxdt \geq - \frac{C}{n}.
\]
Then passing to infinity with $n$ we get
\[
\partial_t\left(\langle\nut,|\lambda-k|\rangle+m_1\right) + \diverg_x\left(\langle\nut,q(\lambda,k)+m_2\right)\leq 0.
\]
Therefore the generalized Young measure generated by the sequence $\{w_n\}$ satisfies the entropy inequlity ~\eqref{entropycond}. Standard methods of analysis for PDEs can be employed to show that the initial condition ~\eqref{mildcts} is satisfied as well.
%Similarly one can adapt the proof in \cite{Szepessy} to show that the initial condition ~\eqref{mildcts} is satisfied.
Therefore $(\nu,m_1,m_2)$ is a mv entropy solution of ~\eqref{conservLaw}. By the previous theorem there is a function $w$ such that $\nut = \delta_{\{w(x,t)\}}$. This function is then the unique entropy solution.
\end{proof}
%\begin{remark}
%The condition ~\eqref{finfty} of existence of $f^\infty$ is superfluous. It is in fact only required by the statement of Lemma~\eqref{measureschar}. One can obtain the bound ~\eqref{mmbound} solely from the growth condition ~\eqref{sublinear}.  
%\end{remark}
\begin{remark}
Other approximation schemes can be used rather than the viscosity approximation employed here. Indeed from the point of view of numerics other schemes are favourable. In \cite{LiPeTa}, \cite{PeTa}, \cite{PeTz} and \cite{BuGG} a kinetic approximation (i.e. an approximation by a Boltzmann type equation) is used, while in \cite{Chen} a hyperbolic conservation law is realized as a limit of the attracitve zero range process (ZRP). In the latter paper a discontinuous flux is considered. An extension of the averaged contraction principle to the case of discontinuous flux (both in $x$ and $u$) was considered in a series of papers \cite{BuGG, BuGG2, BuGG3, GwSwWitZim}.
\end{remark}

\section{Relative entropy method for equations of fluid dynamics}\label{sec3}
In this section we survey selected weak-strong uniqueness results for equations of fluid dynamics. A global existence of measure-valued solutions to the incompressible Euler system was proven in \cite{DiMa} for any finite-energy initial data. Later existence was also shown for the compressible Euler and Navier-Stokes systems, cf. \cite{Neustup}, \cite{KroZaj}. The first weak-strong uniqueness result was proven for incompressible Euler in \cite{BrDLSz}. Note however that existence of a strong solution is needed -- otherwise uniqueness for admissible solutions might not hold, cf. \cite{DLSz}, \cite{SzWi}.
\subsection{Euler equations}
First we consider the \emph{incompressible Euler equations}
\begin{equation}\label{inEuler}
\begin{aligned}
\partial_t u + \diverg(u\otimes u)+\nabla p&=0,\\
\diverg u&=0,
\end{aligned}
\end{equation}
where $u:\tplus\to\R^d$ is the velocity of a fluid and $p$ is the scalar pressure.
\begin{definition}
Let $\nu$ be a Young measure, $m$ a matrix-valued measure on $\Tbb^d\times [0,T]$ satisfying $m(\dxdt) = m_t(\dx)\otimes\dt$ for some family $\{m_t\}_{t\in(0,T)}$ of uniformly bounded measures on $\R^d$. Further let $D\in L^{\infty}(0,T)$ with $D\geq 0$ such that $|m_t|(\Tbb^d)\leq CD(t)$ for some constant $C>0$ and almost every $t\in [0,T]$.\\
The triple $(\nu, m, D)$ is called a \emph{dissipative measure-valued solution} to ~\eqref{inEuler} with initial datum $u_0$ if it satisfies the following conditions:
\begin{enumerate}
    \item for any divergence-free $\phi\in \mathrm{C}^1(\Tbb^d\times[0,T];\R^d)$ the equation
    \begin{equation}
    \begin{aligned}
    \int_0^T\int_{\Tbb^d}\partial_t\phi(x,t)&\cdot\langle\nut,\id\rangle + \nabla\phi(x,t):(\langle\nut,\id\otimes\id\rangle+m_t)\ \dxdt\\  
    &=\int_{\T}\langle\nutt,\id\rangle\cdot\phi(x,\tau)-u_0(x)\cdot\phi(x,0)\ \dx
    \end{aligned}
    \end{equation}
    holds for almost every $\tau\in (0,T)$;
    \item the divergence free condition
    \[\int_{\T}\langle\nut,\id\rangle\cdot\nabla\psi(x)\ \dx = 0
    \]
    holds for every $\psi\in \mathrm{C}^1(\T)$ and a.e. $t\in(0,T)$;
    \item the admissibility condition
    \[E(\tau)\leq\frac{1}{2}\int_{\T}|u_0(x,\tau)|^2\ \dx
    \]
    is satisfied for a.e $\tau\in(0,T)$, where the measure-valued energy is defined by
    \[
    E(\tau)\coloneqq \frac{1}{2}\int_{\T}\langle\nutt,|u|^2\rangle\ \dx + D(\tau).
    \]
\end{enumerate}
\end{definition}
We will now show how measuring the relative entropy between a dissipative mvs and a strong solution leads to a uniqueness result. We repeat the proof presented in \cite{Wiedemann}.
\begin{theorem}
Let $(\nu,m,D)$ be a dissipative measure-valued solution and $U\in \mathrm{C}^1(\Tbb^d\times[0,T])$ a strong solution to ~\eqref{inEuler} with the same initial datum $u_0$. Then $\nutt = \delta_{\{U(x,\tau)\}}$ for almost every $(x,\tau)\in\Tbb^d\times(0,T)$, $m=0$ and $D=0$.
\end{theorem}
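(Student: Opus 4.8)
The plan is to control the \emph{relative energy} between the dissipative measure-valued solution $(\nu,m,D)$ and the strong solution $U$. I would set
\[
\mathcal{E}(\tau) \coloneqq \frac{1}{2}\int_{\Tbb^d}\langle\nutt, |u - U(x,\tau)|^2\rangle\ \dx + D(\tau),
\]
and expand the square to write $\mathcal{E}(\tau) = E(\tau) - \int_{\Tbb^d}\langle\nutt,\id\rangle\cdot U(x,\tau)\ \dx + \frac{1}{2}\int_{\Tbb^d}|U(x,\tau)|^2\ \dx$, where $E(\tau)$ is precisely the measure-valued energy from the admissibility condition. The goal is to derive the Gronwall inequality $\mathcal{E}(\tau)\leq C\int_0^\tau\mathcal{E}(s)\ \di s$ with $C$ depending only on $\norm{\nabla U}_{L^\infty}$, from which $\mathcal{E}\equiv 0$ follows once one checks that the contribution of the (coinciding) initial data cancels.

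First I would estimate the three pieces. Admissibility gives $E(\tau)\leq\frac{1}{2}\int_{\Tbb^d}|u_0|^2\ \dx$, and since $U$ is a classical solution it conserves energy, so $\frac{1}{2}\int_{\Tbb^d}|U(x,\tau)|^2\ \dx = \frac{1}{2}\int_{\Tbb^d}|u_0|^2\ \dx$. For the cross term I would test the weak momentum balance with $\phi = U$, which is admissible because $U\in\mathrm{C}^1$ is divergence-free; this represents $\int_{\Tbb^d}\langle\nutt,\id\rangle\cdot U(x,\tau)\ \dx$ as $\int_{\Tbb^d}|u_0|^2\ \dx$ plus the space-time integral of $\partial_t U\cdot\langle\nut,\id\rangle + \nabla U:(\langle\nut,\id\otimes\id\rangle + m_t)$. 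Substituting $\partial_t U = -(U\cdot\nabla)U - \nabla P$ from the Euler system, the pressure term vanishes because the barycenter $\langle\nut,\id\rangle$ is divergence-free, and the three $\frac{1}{2}\int|u_0|^2$ contributions cancel exactly; here it is essential that $\Tbb^d$ is boundaryless, so integration by parts produces no boundary terms.

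The heart of the matter is then an algebraic identity: using $\diverg U = 0$ and $\diverg\langle\nut,\id\rangle = 0$ to discard the terms that integrate to zero, one finds
\[
\int_{\Tbb^d}\Big[((U\cdot\nabla)U)\cdot\langle\nut,\id\rangle - \nabla U:\langle\nut,\id\otimes\id\rangle\Big]\ \dx = -\int_{\Tbb^d}\nabla U:\langle\nut,(u-U)\otimes(u-U)\rangle\ \dx.
\]
This turns the accumulated expression into $\mathcal{E}(\tau)\leq -\int_0^\tau\int_{\Tbb^d}\nabla U:\langle\nut,(u-U)\otimes(u-U)\rangle\ \dxdt - \int_0^\tau\int_{\Tbb^d}\nabla U:m_t\ \dxdt$. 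The first integrand is bounded by $\norm{\nabla U}_{L^\infty}\langle\nut,|u-U|^2\rangle$ and the second by $\norm{\nabla U}_{L^\infty}|m_t|(\Tbb^d)\leq C\norm{\nabla U}_{L^\infty}D(t)$, so both are dominated by $\mathcal{E}$; Gronwall then forces $\mathcal{E}(\tau) = 0$ for a.e.\ $\tau$. Consequently $\langle\nutt,|u-U|^2\rangle = 0$ forces $\nutt = \delta_{\{U(x,\tau)\}}$, while $D = 0$ and hence $m = 0$ by the domination $|m_t|\leq CD(t)$.

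I expect the main difficulty to be the handling of the concentration measure $m$: the term $\int_{\Tbb^d}\nabla U:m_t$ carries no a priori smallness on its own, and it is only the structural hypothesis $|m_t|(\Tbb^d)\leq CD(t)$ -- together with the fact that $D$ is itself part of the relative energy -- that lets it be absorbed into the Gronwall estimate. This is precisely the measure-valued analogue of the scalar phenomenon highlighted earlier, namely that the concentration appearing in the weak formulation is controlled by the concentration coming from the energy inequality. A secondary point requiring care is the justification that the initial contribution vanishes, which rests on the matching of initial data, $\nu_{(x,0)} = \delta_{\{u_0(x)\}}$ and $U(x,0) = u_0(x)$, and on using admissibility as an inequality at the final time only.
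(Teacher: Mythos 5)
Your proposal is correct and follows essentially the same route as the paper's proof (taken from Wiedemann): expand the relative energy, use admissibility plus energy conservation of the strong solution, test the weak momentum equation with $\phi=U$, substitute the Euler equation so that only the quadratic term $-\nabla_{\mathrm{sym}} U:\langle\nu_{(x,t)},(\id-U)\otimes(\id-U)\rangle$ and the concentration term $-\nabla_{\mathrm{sym}} U:m_t$ survive, and close with Gronwall via $|m_t|(\Tbb^d)\leq CD(t)$. The points you flag as delicate (absorption of $m$ through $D$, cancellation of the initial contributions) are exactly the ones the paper's computation handles, and in the same way.
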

\begin{proof}
We begin by defining the relative entropy $E_{\text{rel}}$ as
\[E_{rel}(\tau) \coloneqq \frac{1}{2}\int_{\T}\langle\nutt,|\id - U(x,\tau)|^2\rangle\ \dx + D(\tau).
\]
This quantity can be estimated as follows.
\begin{equation*}
    \begin{aligned}
    E_{\text{rel}}(\tau) &= \frac{1}{2}\int_{\T}|U(x,\tau)|^2\ \dx + \frac{1}{2}\int_{\T}\langle\nutt,|\id-U|^2\rangle\ \dx - \int_{\T}\langle\nutt,\id\rangle\cdot U(x,\tau)\ \dx + D(\tau) \\
    &\leq \frac{1}{2}\int_{\T}|u_0|^2\ \dx + \frac{1}{2}\int_{\T}|u_0|^2\ \dx - \int_{\T}\langle\nutt,\id\rangle\cdot U(x,\tau)\ \dx \\
    &=\intd |u_0|^2\ \dx - \intd u_0(x)\cdot u_0(x)\ \dx \\
    &\hspace{1cm}- \int_0^{\tau}\intd \partial_t U(x,t)\cdot\langle\nut,\id\rangle + \nabla U(x,t):\langle\nut,\id\otimes\id\rangle \ \dxdt \\
    &\hspace{1cm}-\int_0^{\tau}\intd \nabla U(x,t)\ \mathrm{d}m(x,t)\\
    &=\int_0^{\tau}\intd\diverg(U\otimes U)(x,t)\cdot \langle\nut,\id\rangle - \nabla U(x,t)\langle\nut,\id\otimes\id\rangle \ \dxdt \\
    &\hspace{1cm}-\int_0^{\tau}\intd\nabla_{\text{sym}}U(x,t)\ \mathrm{d}m(x,t) \\
    &=\int_0^{\tau}\intd \langle\nut, (U(x,t)-\id)\cdot\nabla_{\text{sym}}U(x,t)(\id - U(x,t))\rangle\ \dxdt \\
    &\hspace{1cm} - \int_0^{\tau}\intd\nabla_{\text{sym}}U(x,t)\ \mathrm{d}m(x,t) \\
    &\leq\int_0^{\tau}\norm{\nabla_{\text{sym}}U(t)}_{L^{\infty}}E_{\text{rel}}(t)\ \dt.
    \end{aligned}
\end{equation*}
It now follows from Gronwall's inequality that the relative entropy is zero almost everywhere. This in turn implies that $\nutt = \delta_{\{U(x,\tau)\}}$ and $D(\tau) = 0$.  
\end{proof}
Now consider the {\it isentropic compressible Euler system}
\begin{equation}\label{compresseuler}
\begin{aligned}
\partial_t\rho + \diverg(\rho u) &=0,\\
\partial_t(\rho u) + \diverg(\rho u\otimes u) + \nabla(\rho^\gamma) &= 0,
\end{aligned}
\end{equation}
where $\gamma>1$ is the adiabatic coefficient. The definition of a measure-valued solution to the above system requires a slight refinement of the Alibert-Bouchitt\'{e} framework, which we ignore here; see Section $3$ in \cite{GwSwWi} for details. The following notation is used for brevity
\[\overline{f}(\dxdt) = \langle\nut,f\rangle\;\dxdt+\langle\nut^\infty,f^\infty\rangle\;m(\dxdt).
\]
\begin{definition}
The triple $(\nu,m,\nu^\infty)$ is called a \emph{measure-valued solution} of ~\eqref{compresseuler} with initial data $(\rho_0,u_0)$ such that $\rho_0$ and $\rho_0 u_0$ are integrable if for every $\tau\in[0,T]$, $\psi\in \mathrm{C}^1(\T \times [0,T])$ and $\phi\in \mathrm{C}^1(\T \times [0,T];\R^n)$
\begin{equation}
\int_0^T\int_{\T}\partial_t\psi\overline{\rho}+\nabla\psi\cdot\overline{\rho }\ \dxdt + \int_{\T}\psi(x,0)\rho_0 - \psi(x,T)\overline{\rho}(x,T)\ \dx = 0
\end{equation}
and
\begin{equation}
\begin{aligned}
\int_0^T\int_{\T}\partial_t\phi\cdot\overline{\rho u} &+ \nabla\phi:\overline{\rho u\otimes u} + \diverg\phi\overline{\rho^\gamma}\ \dxdt \\
&+\int_{\T}\phi(x,0)\cdot\rho_0 u_0 - \phi(x,T)\cdot\overline{\rho u}(x,T)\ \dx = 0.
\end{aligned}
\end{equation}
We then define the entropy of such a measure-valued solution
\[
E_{\text{mvs}}(t)\coloneqq \int_{\T}\frac{1}{2}\overline{\rho |u|^2}(x,t) + \frac{1}{\gamma-1}\overline{\rho^\gamma}(x,t)\ \dx
\]
and
\[
E_0 \coloneqq \int_{\T}\frac{1}{2}\rho_0 |u_0|^2(x) + \frac{1}{\gamma-1}\rho_0^\gamma(x)\ \dx
\]
\end{definition}
An admissibility criterion is then posed as follows
\[
E_{\text{mvs}}\leq E_0.
\]
It was shown in \cite{GwSwWi} that one can use the relative entropy method to prove the following weak-strong uniqueness result.
\begin{theorem}
Suppose 
\[(P,U)\in W^{1,\infty}(\T \times [0,T])\times C^1(\T \times [0,T])
\]
is a strong solution of ~\eqref{compresseuler} with initial data $(\rho_0,u_0)$ s.t.
\[
\rho_0\geq c>0,\hspace{0.5cm}\rho_0\in L^r(\T),\hspace{0.5cm}\rho_0 u_0\in L^1(\T)\hspace{0.25cm}\text{and}\hspace{0.25cm}P\geq c>0.
\]
Then if $(\nu,m,\nu^\infty)$ is an admissible mv solution with the same initial data, then
\[\nut = \delta_{\left(P(x,t),\sqrt{P(x,t)}U(x,t)\right)}\hspace{0.25cm}\text{for a.e.}\; (x,t)
\]
and $m=0$.
\end{theorem}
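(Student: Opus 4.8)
The plan is to run the same relative-entropy argument as in the incompressible case, now comparing the measure-valued solution $(\nu,m,\nu^\infty)$ with the strong solution $(P,U)$, where $P$ plays the role of a reference density and $U$ of a reference velocity. I define the relative entropy
\begin{equation*}
E_{\mathrm{rel}}(\tau) \coloneqq \int_{\T}\tfrac{1}{2}\,\overline{\rho\,|u-U(x,\tau)|^2}(x,\tau) + \overline{H(\rho\,|\,P(x,\tau))}(x,\tau)\ \dx,
\end{equation*}
where $H(\rho\,|\,P)\coloneqq\tfrac{1}{\gamma-1}\rho^\gamma-\tfrac{\gamma}{\gamma-1}P^{\gamma-1}(\rho-P)-\tfrac{1}{\gamma-1}P^\gamma$ is the Bregman divergence of the internal energy $\rho\mapsto\tfrac{1}{\gamma-1}\rho^\gamma$, and the bar denotes the measure-valued average together with its concentration part, exactly as in the admissibility functional $E_{\mathrm{mvs}}$. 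Since the internal energy is strictly convex, $H(\rho\,|\,P)\geq 0$ with equality iff $\rho=P$; hence $E_{\mathrm{rel}}\geq 0$, its concentration part controls $|m|$, and $E_{\mathrm{rel}}=0$ forces $\nut=\delta_{(P,\sqrt{P}U)}$ together with $m=0$. The goal is therefore the Gronwall bound $E_{\mathrm{rel}}(\tau)\leq C\int_0^\tau E_{\mathrm{rel}}(t)\,\dt$ with vanishing initial value.

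I would first expand the square and the Bregman term, writing $E_{\mathrm{rel}}(\tau)=E_{\mathrm{mvs}}(\tau)+\text{(cross terms)}$, the cross terms being linear in $\overline{\rho}$ and $\overline{\rho u}$ tested against quantities built from $(P,U)$. The admissibility criterion gives $E_{\mathrm{mvs}}(\tau)\leq E_0$, and, since the two solutions share the initial data $(\rho_0,u_0)$, the constant $E_0$ equals the energy of $(P,U)$ at time $0$, so that combining the admissibility inequality with the matching initial data yields a relative energy inequality with $E_{\mathrm{rel}}(0)=0$. I then rewrite the time-integrated cross terms using the weak formulations of the continuity and momentum equations for $(\nu,m,\nu^\infty)$, tested respectively against $\psi=\tfrac{1}{2}|U|^2-\tfrac{\gamma}{\gamma-1}P^{\gamma-1}$ and $\phi=U$, and substitute $\partial_t U$, $\partial_t P$ from the equations satisfied by the strong solution. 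The regularity $U\in\mathrm{C}^1$, $P\in W^{1,\infty}$ is exactly what makes these admissible test functions, after a routine mollification replacing $P^{\gamma-1}\in W^{1,\infty}$ by $\mathrm{C}^1$ approximants.

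After these substitutions the singular contributions cancel in the usual Dafermos fashion, leaving a quadratic remainder: a relative-kinetic term $\int_0^\tau\!\int_{\T}\langle\nut,\rho\,(u-U)\cdot\nabla_{\mathrm{sym}}U\,(u-U)\rangle\,\dxdt$, exactly as in the incompressible computation; a pressure remainder that, because $\tfrac{1}{\gamma-1}\rho^\gamma$ and $\rho^\gamma$ are proportional, is again a multiple of $H(\rho\,|\,P)$ weighted by $\diverg U$; and a concentration term $\int_0^\tau\!\int_{\T}\nabla_{\mathrm{sym}}U:\di m$. Using $\|\nabla_{\mathrm{sym}}U\|_{L^\infty},\|\nabla P\|_{L^\infty}<\infty$ together with the domination of $|m_t|$ by the concentration defect already contained in $E_{\mathrm{rel}}$, each remainder is bounded by $C\,E_{\mathrm{rel}}(t)$. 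This produces the desired Gronwall inequality; with $E_{\mathrm{rel}}(0)=0$ it gives $E_{\mathrm{rel}}\equiv0$, and the reasoning of the first paragraph then delivers $\nut=\delta_{(P,\sqrt{P}U)}$ and $m=0$ (the lower bound $P\geq c>0$ keeps the density away from vacuum, so $u=U$ is forced wherever mass is present).

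The main obstacle, and the place where this argument genuinely differs from the incompressible one, is the bookkeeping of the concentration measure $m$ and its recession functions inside the bar-averages of the nonlinear fluxes $\rho u\otimes u$ and $\rho^\gamma$: one must verify that these concentration contributions enter the final estimate with a favourable sign and are controlled by the defect governing $E_{\mathrm{rel}}$, which is precisely what the refinement of the Alibert--Bouchitt\'{e} framework of \cite{GwSwWi} is built to ensure. A secondary technical point is checking that the relative internal energy $H(\rho\,|\,P)$ is two-sided comparable to the pressure remainder both near $\rho=P$ and for large $\rho$, so that it simultaneously controls that remainder and is itself controlled by the available energy; this is where the integrability $\rho_0\in L^r$ and the positivity $P\geq c>0$ are used.
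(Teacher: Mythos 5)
Your relative entropy, once the Bregman term is expanded, is exactly the functional $\frac{1}{2}\int_{\T}\overline{\rho|u-U|^2} + \frac{1}{\gamma-1}\overline{\rho^\gamma}-\frac{\gamma}{\gamma-1}\overline{P^{\gamma-1}\rho} + \overline{P^\gamma}\ \dx$ that the paper records, and your Gronwall scheme (testing the weak formulations with $U$ and $\frac{1}{2}|U|^2-\frac{\gamma}{\gamma-1}P^{\gamma-1}$, cancelling in the Dafermos fashion, and controlling the concentration remainder by the defect) is the argument of \cite{GwSwWi} that the paper cites rather than reproduces. So your proposal is correct in outline and takes essentially the same route as the paper's source; the only point you leave implicit is that the Young measure there lives in the variables $(\rho,\sqrt{\rho}u)$, which is what makes $\rho u\otimes u$ quadratic and the Alibert--Bouchitt\'e bookkeeping of $m$ work.
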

The relative entropy functional used in this case has the form
\[
E_{\text{rel}}(t)\coloneqq \frac{1}{2}\int_{\T}\overline{\rho|u-U|^2} + \frac{1}{\gamma-1}\overline{\rho^\gamma}-\frac{\gamma}{\gamma-1}\overline{P^{\gamma-1}\rho} + \overline{P^\gamma}\ \dx.
\]

\subsection{Navier-Stokes equations}
We now consider the barotropic Navier-Stokes system
\begin{eqnarray}
\label{I1}
\partial_t \vr + \Div (\vr u) &=& 0,  \\
\label{I2}  \partial_t (\vr u) + \Div (\vr u \otimes u) + \Grad p(\vr) &=& \Div \mathbb{S} (\Grad u), \\
\label{I3} u|_{\partial \Omega} &=& 0.
\end{eqnarray}
Here $\Omega$ is a regular bounded domain in $\R^2$ or $\R^3$ and $\mathbb{S}$ is the Newtonian viscous stress.
The following definition of a dissipative measure-valued solution and subsequent results are taken from \cite{FeGSG}.
\begin{definition} \label{DD1}
We say that a parametrized measure $\{ \nut \}_{(x,t) \in \Omega \times (O,T)}$,
\[
\nu \in L^{\infty}_{\rm weak}\left(  \Omega \times (0,T); \mathcal{P} \left( \mathbb{R}^N \times [0,\infty)\right) \right),\
\left< \nut; s \right> \equiv \vr,\ \left< \nut; \vc{v} \right> \equiv u
\]
is a dissipative measure-valued solution of the Navier-Stokes system (\ref{I1} -- \ref{I3}) in
$\Omega \times (0,T)$, with the initial conditions $\nu_{(x,0)}$ and dissipation defect $\mathcal{D}$,
\[
\mathcal{D} \in L^\infty(0,T), \ \mathcal{D} \geq 0,
\]
if the following holds.
\begin{itemize}
\item {\bf Equation of continuity.}
There exists a measure $r^C\in L^1(0,T;\mathcal{M}(\Ov{\Omega}))$ and $\chi\in L^1(0,T)$ such that for a.a.\ $\tau\in(0,T)$ and every $\psi \in \mathrm{C}^1(\Ov{\Omega} \times [0,T])$,
\begin{equation}
\left| \langle r^C (\tau) ; \Grad\psi \rangle \right| \leq \chi(\tau)  \mathcal{D} (\tau) \| \psi \|_{C^1(\Ov{\Omega})}
\end{equation}
and
\begin{equation} \label{dmvB1}
\begin{split}
\intO{ \langle \nutt; s \rangle \psi (\cdot, \tau) } &-  \intO{ \langle \nu_{0}; s \rangle \psi (0, \cdot) } \\
&= \int_0^\tau \intO{ \Big[ \langle \nut; s \rangle \partial_t \psi + \langle \nut; s \vc{v} \rangle \cdot \Grad \psi \Big] } \ \dt
+ \int_0^\tau \langle r^C; \Grad \psi \rangle \ \dt.
\end{split}
\end{equation}

\color{black}

\item {\bf Momentum equation.}
\[
u = \left< \nut; \vc{v} \right> \in L^2(0,T; W^{1,2}_0 (\Omega;\mathbb{R}^N)),
\]
and there exists a measure $r^M\in L^1(0,T;\mathcal{M}(\Ov{\Omega}))$ and $\xi\in L^1(0,T)$ such that for a.a.\ $\tau\in(0,T)$ and every $\varphi \in \mathrm{C}^1(\Ov{\Omega} \times [0,T] ; \mathbb{R}^N)$,  $\varphi|_{\partial \Omega} = 0$,
\begin{equation}
\left| \langle r^M (\tau) ; \Grad\varphi \rangle \right| \leq \xi(\tau)  \mathcal{D} (\tau) \| \varphi \|_{C^1(\Ov{\Omega})}
\end{equation}
and
\begin{equation} \label{dmvB2}
\begin{split}
&\intO{ \langle \nutt; s \vc{v} \rangle \cdot \varphi ( \cdot, \tau) }  -  \intO{ \langle \nu_{0}; s \vc{v} \rangle \cdot \varphi (0, \cdot) }\\
&= \int_0^\tau \intO{ \Big[ \langle \nut ; s \vc{v} \rangle \cdot \partial_t \varphi + \langle \nut; s (\vc{v} \otimes \vc{v} ) \rangle : \Grad \varphi +
\langle \nut ; p(s) \rangle \Div \varphi \Big] } \ \dt\\
& - \int_0^\tau \intO{  \mathbb{S}({\Grad u })  : \Grad \varphi } \ \dt + \int_0^\tau \left< {r}^M ; \Grad \varphi \right> \ \dt.
\end{split}
\end{equation}

\color{black}

\item{\bf Energy inequality.}
\begin{equation} \label{dmvEI}
\begin{split}
\intO{ \left< \nutt;  \left( \frac{1}{2} s |\vc{v}|^2 + P(s) \right) \right> }
&+ \int_0^\tau \intO{ \mathbb{S}(\Grad u) : \Grad u  } \ \dt + \mathcal{D}(\tau) \\
&\leq \intO{ \left< \nu_{(x,0)}; \left( \frac{1}{2} s |\vc{v}|^2 + P(s) \right) \right>  }
\end{split}
\end{equation}
for a.a. $\tau \in (0,T)$.
In addition, the following version
of ``Poincar\' e's inequality" holds for a.a. $\tau \in (0,T)$:
\begin{equation} \label{KoPo}
\int_0^\tau \intO{ \left< \nut ;  |\vc{v} - u|^2 \right> } \ \dt \leq c_P  \mathcal{D}(\tau).
\end{equation}
\end{itemize}
\end{definition}
One can show (Theorem 2.1 in \cite{FeGSG}) that if the pressure satisfies the following coercivity assumptions
\begin{equation} \label{BB6}
p \in \mathrm{C}[0, \infty) \cap C^2(0, \infty), \ p(0) = 0, \ p'(\vr) > 0 \ \mbox{for}\ \vr > 0, \ \liminf_{\vr \to \infty} p'(\vr) > 0,\
\liminf_{\vr \to \infty} \frac{P(\vr)}{p(\vr)} > 0,
\end{equation}
then there exists a dissipative mv solution with a prescribed finite-energy initial data. 
The following weak-strong uniqueness result can then be proven
\begin{theorem} \label{TT1}
Let $\Omega \subset \mathbb{R}^N$, $N=2,3$ be a bounded smooth domain. Suppose the pressure $p$ satisfies (\ref{BB6}). Let $\{ \nut, \mathcal{D} \}$ be a dissipative measure-valued
solution to the barotropic Navier-Stokes system (\ref{I1}--\ref{I3}) in $\Omega \times (0,T)$, with the initial state represented by $\nu_{(x,0)}$, in the sense specified in Definition \ref{DD1}.
Let $[r, \vc{U}]$ be a strong solution of (\ref{I1}--\ref{I3}) in $\Omega \times (0,T)$ belonging to the class
\[
r, \ \Grad r, \ \vc{U},\ \Grad \vc{U} \in \mathrm{C}(\Ov{\Omega} \times [0,T]),\
\partial_t \vc{U} \in L^2(0,T; C(\overline{\Omega};\R^N)),\ r > 0,\ \vc{U}|_{\partial \Omega} = 0.
\]

Then there is a constant $\Lambda = \Lambda(T)$, depending only on the norms of $r$, $r^{-1}$, $\vc{U}$, $\chi$, and $\xi$ in the aforementioned spaces, such that
\[
\begin{split}
& \intO{ \left< \nutt;  \frac{1}{2} s |\vc{v} - \vc{U}|^2 + P(s) - P'(r) (s - r) - P(r) \right> }
 \\ &+  \int_0^\tau \intO{ | \Grad u - \Grad \vc{U} |^2   } \ \dt + \mathcal{D}(\tau) \\
&\leq \Lambda(T) \intO{ \left< \nu_{(x,0)};  \frac{1}{2} s |\vc{v} - \vc{U}(0, \cdot) |^2 + P(s) - P'(r(0,\cdot)) (s - r(0,\cdot)) - P(r(0,\cdot)) \right> }
\end{split}
\]
for a.a. $\tau \in (0,T)$. In particular, if the initial states coincide, meaning
\[
\nu_{(x,0)} = \delta_{[ r(x,0), \vc{U}(x,0) ]} \ \mbox{for a.a.} \ x \in \Omega
\]
then $\mathcal{D} = 0$, and
\[
\nutt = \delta_{[ r(x, \tau), \vc{U}(x, \tau) ]} \ \mbox{for a.a.}\ \tau \in (0,T),\ x \in \Omega.
\]

\end{theorem}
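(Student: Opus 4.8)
The plan is to run the standard relative entropy / weak-strong uniqueness argument; the genuinely new bookkeeping concerns the concentration measures $r^C$, $r^M$ and the dissipation defect $\mathcal{D}$, all of which must be reabsorbed at the end. Write
\[
\mathcal{E}_{\mathrm{rel}}(\tau) = \intO{ \left< \nutt; \tfrac{1}{2}s|\vc{v}-\vc{U}|^2 + P(s) - P'(r)(s-r) - P(r)\right>} + \mathcal{D}(\tau)
\]
for the relative entropy functional. Under the coercivity hypotheses \eqref{BB6} the potential $s\mapsto P(s)$ is convex, so the Bregman remainder $P(s)-P'(r)(s-r)-P(r)$ is non-negative and vanishes precisely at $s=r$; combined with the non-negative kinetic term $\tfrac12 s|\vc v-\vc U|^2$ this makes $\mathcal{E}_{\mathrm{rel}}\geq0$, and $\mathcal{E}_{\mathrm{rel}}(\tau)=0$ forces $\nutt=\delta_{[r,\vc U]}$ and $\mathcal{D}(\tau)=0$. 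The goal is therefore the integral inequality $\mathcal{E}_{\mathrm{rel}}(\tau)\leq\mathcal{E}_{\mathrm{rel}}(0)+\int_0^\tau C(t)\,\mathcal{E}_{\mathrm{rel}}(t)\,\dt$, which by Gronwall's lemma yields the stated estimate with $\Lambda(T)=\exp\!\big(\int_0^T C(t)\,\dt\big)$.

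First I would expand $\tfrac12 s|\vc v-\vc U|^2 = \tfrac12 s|\vc v|^2 - s\vc v\cdot\vc U + \tfrac12 s|\vc U|^2$ and rewrite $\mathcal{E}_{\mathrm{rel}}$ as the total measure-valued energy---estimated from above via the energy inequality \eqref{dmvEI}---minus the correction terms $\intO{\langle\nutt;s\vc v\rangle\cdot\vc U}$, $\tfrac12\intO{\langle\nutt;s\rangle|\vc U|^2}$ and $\intO{\langle\nutt;s\rangle P'(r)}$, together with purely strong-solution quantities. The time evolution of each correction is then read off from the weak formulations: test the momentum equation \eqref{dmvB2} with $\varphi=\vc U$, and the continuity equation \eqref{dmvB1} first with $\psi=\tfrac12|\vc U|^2$ and then with $\psi=P'(r)$. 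This generates, among others, the convective term $\langle\nut;s(\vc v\otimes\vc v)\rangle:\Grad\vc U$, the pressure term $\langle\nut;p(s)\rangle\Div\vc U$, the viscous cross term $\mathbb{S}(\Grad u):\Grad\vc U$, and the concentration contributions $\int_0^\tau\langle r^C;\Grad P'(r)\rangle\,\dt$ and $\int_0^\tau\langle r^M;\Grad\vc U\rangle\,\dt$.

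Next, using that $[r,\vc U]$ solves \eqref{I1}--\eqref{I3} pointwise, I would eliminate $\partial_t\vc U$ and $\partial_t r$ through the identities $r(\partial_t\vc U+\vc U\cdot\Grad\vc U)+\Grad p(r)=\Div\mathbb{S}(\Grad\vc U)$ and $\partial_t r+\Div(r\vc U)=0$. After collecting terms the convective and pressure contributions reorganize into a quadratic relative-entropy remainder, schematically $\langle\nut;s(\vc v-\vc U)\otimes(\vc v-\vc U)\rangle:\Grad\vc U$ plus a relative-pressure remainder, each bounded pointwise by $\|\Grad\vc U\|_{L^\infty}$ times the integrand of $\mathcal{E}_{\mathrm{rel}}$. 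The viscous terms require care: the dissipation $\int_0^\tau\intO{\mathbb{S}(\Grad u):\Grad u}$ from \eqref{dmvEI}, the cross term $-\int_0^\tau\intO{\mathbb{S}(\Grad u):\Grad\vc U}$ from the momentum test, and the contributions produced when $\Div\mathbb{S}(\Grad\vc U)$ is substituted for $r(\partial_t\vc U+\vc U\cdot\Grad\vc U)+\Grad p(r)$ assemble, by linearity of $\mathbb{S}$, into the coercive quantity $\int_0^\tau\intO{(\mathbb{S}(\Grad u)-\mathbb{S}(\Grad\vc U)):(\Grad u-\Grad\vc U)}\geq c\int_0^\tau\intO{|\Grad u-\Grad\vc U|^2}$, retained on the left-hand side. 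The concentration remainders are controlled by the defining bounds $|\langle r^C;\Grad\psi\rangle|\leq\chi\mathcal{D}\|\psi\|_{C^1}$ and $|\langle r^M;\Grad\varphi\rangle|\leq\xi\mathcal{D}\|\varphi\|_{C^1}$, so that with $r,\vc U$ smooth they are dominated by $C(\chi+\xi)\mathcal{D}\leq C(t)\mathcal{E}_{\mathrm{rel}}$.

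The main obstacle, and the genuinely new point relative to the Euler argument, is the mismatch between the kinematic variable $\vc v$ in the convective second moment $\langle\nut;s\vc v\otimes\vc v\rangle$ and the barycentric velocity $u=\langle\nut;\vc v\rangle$ carrying the viscosity $\mathbb{S}(\Grad u)$. Splitting $\vc v=(\vc v-u)+u$ in the convective remainder produces a term involving $\langle\nut;|\vc v-u|^2\rangle$ that is \emph{not} directly of relative-entropy form; here the version of Poincar\'e's inequality \eqref{KoPo}, namely $\int_0^\tau\intO{\langle\nut;|\vc v-u|^2\rangle}\leq c_P\mathcal{D}(\tau)$, is exactly what absorbs it into $\mathcal{D}\leq\mathcal{E}_{\mathrm{rel}}$. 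Once every remainder is bounded by $C(t)\mathcal{E}_{\mathrm{rel}}(t)$, Gronwall's lemma closes the estimate; the final rigidity follows since $\mathcal{E}_{\mathrm{rel}}(0)=0$ when $\nu_{(x,0)}=\delta_{[r(0),\vc U(0)]}$, forcing $\mathcal{E}_{\mathrm{rel}}\equiv0$ and hence $\mathcal{D}=0$ and $\nutt=\delta_{[r,\vc U]}$ by strict convexity of $P$.
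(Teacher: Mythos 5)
The paper itself gives no proof of Theorem \ref{TT1}; it is quoted as a survey result from \cite{FeGSG}, so there is no in-paper argument to compare against. Your outline faithfully reproduces the strategy of that reference: the relative energy functional augmented by $\mathcal{D}$, the expansion via the energy inequality \eqref{dmvEI} and the tests $\varphi=\vc{U}$ in \eqref{dmvB2} and $\psi=\tfrac12|\vc{U}|^2$, $\psi=P'(r)$ in \eqref{dmvB1}, absorption of the concentration remainders through the $\chi,\xi$-bounds, coercive reassembly of the viscous terms, and Gronwall. You also correctly single out the two genuinely new ingredients relative to the Euler case, namely the control of $r^C$, $r^M$ by $\mathcal{D}$ and the use of the Poincar\'e-type inequality \eqref{KoPo} to reconcile the kinematic variable $\vc{v}$ with the barycentric velocity $u=\langle\nut;\vc{v}\rangle$ carrying the viscosity. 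The proposal is a plan rather than a complete derivation (the pointwise bound of the relative-pressure remainder by the Bregman term, and the precise bookkeeping of where $\langle\nut;s(\vc{v}-u)\rangle$-type correlations appear, are asserted rather than carried out), but the route is the correct one and I see no step that would fail.
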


\section{Polyconvex Elastodynamics}\label{sec4}

In this section we consider the {\it system of elasticity}
\begin{equation}
\label{mainI}
\frac{\partial^2 y}{\partial t^2}=\nabla\cdot S(\nabla y),
\end{equation}
where $y \; : \; {Q} \times {\R}^+ \to{\R}^3$ stands for the motion, $F = \nabla y$, $v = \partial_t y$,
and $S$ stands for the Piola-Kirchoff stress tensor
obtained as the gradient of a stored energy function, 
$S = \frac{\partial W}{\partial F}$.  Here we assume that
$W$ is polyconvex, that is  $W(F) = G ( \Phi(F))$ where 
$G:\mat^{3\times 3}\times\mat^{3\times 3}\times \R \to [0,\infty)$
is a strictly convex function and $\Phi(F) = (F ,\cof F, \det F)\in
\mat^{3\times 3}\times\mat^{3\times 3}\times \R$
stands for the vector of null-Lagrangians: $F$, the cofactor matrix $\cof F$
and the determinant $\det F$.
It is observed in \cite{DeStTz} and \cite{DeStTz2} that this system can be embedded into the following symmetrizable hyperbolic system in a new dependent variable $\Xi=(F,Z,w)$ taking values in
$\mat^{3\times 3}\times \mat^{3\times 3}\times\R$
\begin{equation}
\begin{aligned}
\frac{\partial v_i}{\partial t}&=\frac{\partial}{\partial x^\alpha}\left(\frac{\partial G}{\partial\Xi^A}(\Xi)\frac{\partial\Phi^A}{\partial
F_{i\alpha}}(F)\right),\\
\frac{\partial\Xi^A}{\partial t}&=\frac{\partial}{\partial
x^\alpha}\left(\frac{\partial\Phi^A}{\partial
F_{i\alpha}}(F)v_i\right).
\end{aligned}
\end{equation}
This system admits the following entropy-entropy flux pair
\begin{equation}
\begin{aligned}
\eta(v,F,Z,w)&=\frac{1}{2}|v|^2+G(F,Z,w),\\
q_\alpha&=v_i\,\frac{\partial G}{\partial\Xi^A}(\Xi)\frac{\partial\Phi^A}{\partial F_{i\alpha}}(F).
\end{aligned}
\end{equation}
A strong solution to ~\eqref{mainI} is a function $w\in W^{1,\infty}$. It automatically satisfies
\begin{equation}\label{elastodynentropy}
\partial_t \eta + \partial_\alpha q_\alpha = 0. 
\end{equation}
A weak solution which satisfies ~\eqref{elastodynentropy} as an inequality is called an entropy weak solution.
The following definition of a dissipative mv solution is taken from \cite{DeStTz}.

\begin{definition}
\label{defmvpcel}
Let the pair $(y,\nu)$ consist of a map
$y$, with distributional time and space derivatives 
$(v, F) \in L^{\infty}(L^2) \oplus L^{\infty}(L^p) $
and a Young measure $\nu=(\nut)_{x,t\in \overline{Q}_T}$ generated by a sequence satisfying
$$
\sup_{\eps, t}\int\,\eta( v^\eps , F^\eps, Z^\eps, w^\eps )\,dx\,<\infty
$$
which represents weak limits in the following way:
\begin{equation}
\label{defym}
\begin{aligned}
\hbox{wk-}\lim_{\epsilon\to 0}&\;f( v^\eps , F^\eps, Z^\eps, w^\eps )  = \int f(\lambda_v, \lambda_\Xi)  d\nut(\lambda_v, \lambda_\Xi) \\
&\qquad \forall \;\mbox{continuous}\; f=f(\lambda_v, \lambda_\Xi)\; \mbox{with }\; 
\lim_{|\lambda_v| + |\lambda_\Xi| \to \infty} 
\frac{ f(\lambda_v, \lambda_\Xi) }{ \frac{1}{2}|\lambda_v|^2 + G(\lambda_\Xi) } = 0 
\end{aligned}
\end{equation}
where  $\lambda_v \in \R^3$, $\lambda_\Xi = (\lambda_F, \lambda_Z, \lambda_w) \in 
\mat^{3\times 3}\times \mat^{3\times 3}\times\R=\R^{19}$.
The Young measure is connected with the map $y$ through the requirements that
(almost everywhere) 
\begin{equation}
\label{defavg}
F = \langle \nu, \lambda_F \rangle  \, , \quad  v = \langle \nu , \lambda_v \rangle \, , \quad
\Xi = \langle \nu, \lambda_Xi \rangle \,.
\end{equation}
The pair $(y,\nu)$ is a \emph{measure-valued solution} to ~\eqref{mainI} if for $i=1, 2 ,  3$
\begin{align}
\partial_t v_i - \partial_\alpha 
\big\langle \nu, \frac{\partial G}{\partial\Xi^A}  ( \lambda_\Xi ) \, \frac{\partial\Phi^A}{\partial F_{i\alpha}} (\lambda_F) \big\rangle = 0
\label{defmv1}
\\
\intertext{and  for $A=1,\dots , 19$}
\partial_t \Phi^A (F)   - 
\partial_\alpha  \big ( \frac{\partial\Phi^A}{\partial F_{i\alpha}}(F)  v_i  \big ) = 0
\label{defmv2}
\end{align}
in distributions with
\begin{equation}
\label{defmv3}
\Xi  = \Phi ( \langle \nu, \lambda_F \rangle ) = \Phi(F) \, .
\end{equation}
The solution is said to be a {\em dissipative measure-valued solution
with concentration} if it is a measure-valued solution which verifies 
in addition:
\[
\iint\,\frac{d \theta}{d t} \,\Bigl(\langle\nu,\eta\rangle
+\gamma\Bigr)\ \dxdt +\int\theta(0)\eta_{0}(x)dx\geq 0\,,
\]
for all non-negative functions $\theta=\theta(t) \in \mathrm{C}^1_c ([0,T))$ with $\theta \ge 0$. Here
$\eta_0$ means the entropy $\eta$ evaluated on the initial data and
$\gamma$ is the non-negative concentration measure.
\end{definition}
Under the following additional growth assumptions on the function $G$:
\begin{itemize}
\item[(H1)] $G \in C^3(\mat^{3\times 3} \times \mat^{3\times 3} \times \R ; [0,\infty))$ is a strictly convex 
function satisfying for some $\gamma >0$ the bound $D^2 G \ge \gamma > 0$,

\item[(H2)] $G(F,Z,w) \ge c_1 ( |F|^p +  |Z|^q +  |w|^r + 1)  - c_2$ where $p\in (4, \infty), \ \ q, r \in [2,\infty)$,

\item[(H3)] $G(F,Z,w) \le c (  |F|^p +  |Z|^q + |w|^r +1)$,

\item[(H4)$^\prime$] $| \partial_F G| + |\partial_Z G|^{\frac{p}{p-1}} + |\partial_w G|^{\frac{p}{p-2} } \le o(1) (  |F|^p +  |Z|^q + |w|^r +1)$ \quad where $o(1) \to 0$ as $|\Xi| \to \infty$,
\end{itemize}

 existence of dissipative mv solutions as well as a weak-strong uniqueness result are proven, cf \cite{DeStTz2} and \cite{DeStTz}.
\begin{theorem}
Let $G$ satisfy $(H1)-(H3)$, $(H4)'$ and let  $(y, \nu, \gamma)$ be a dissipative measure-valued 
solution in the sense of definition \ref{defmvpcel}.
If the initial data equal those of a Lipschitz bounded solution $(\hat{v},\hat{F})\in W^{1,\infty}(\overline{Q}_T)$: 
$$(v(x,0),\Xi(x,0))=(\hat v(x,0),\Phi(\hat F(x,0)))$$
then $\gamma$ is zero, $(v,\Xi)=(\hat v,\Phi(\hat F))$ 
and $\nu=\delta_{\hat v,\Phi(\hat F)}$.
\end{theorem}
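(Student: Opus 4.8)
The plan is to run the relative entropy method on the enlarged symmetrizable system, using that the extended entropy $\eta(v,\Xi)=\tfrac12|v|^2+G(\Xi)$ is strictly convex by (H1). First I would introduce the relative entropy density
\[
\eta(\lambda_v,\lambda_\Xi\mid \hat v,\hat\Xi)\coloneqq \eta(\lambda_v,\lambda_\Xi)-\eta(\hat v,\hat\Xi)-\nabla\eta(\hat v,\hat\Xi)\cdot\big((\lambda_v,\lambda_\Xi)-(\hat v,\hat\Xi)\big),
\]
where $\hat\Xi=\Phi(\hat F)$, $\hat U\coloneqq(\hat v,\hat\Xi)$, and $\nabla\eta(\hat v,\hat\Xi)=(\hat v,\partial_\Xi G(\hat\Xi))$, together with the associated relative flux built from $q_\alpha$. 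Since $D^2G\ge\gamma>0$, the relative entropy density admits a quadratic lower bound $\eta(\lambda\mid\hat U)\ge c\,(|\lambda_v-\hat v|^2+|\lambda_\Xi-\hat\Xi|^2)$ with $c>0$, so that the functional
\[
\Hcal(\tau)\coloneqq\int\langle\nu_{(x,\tau)},\eta(\cdot\mid \hat U(x,\tau))\rangle\,\dx+\gamma(\tau)
\]
controls simultaneously the distance between $\nu_{(x,\tau)}$ and the Dirac mass $\delta_{\hat U}$ and the concentration defect.

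Next I would track the time evolution of $\Hcal$. Expanding $\Hcal$ splits it into a convex part $\int\langle\nu,\eta\rangle\,\dx+\gamma$, an affine part linear in the means $(\langle\nu,\lambda_v\rangle,\langle\nu,\lambda_\Xi\rangle)=(v,\Xi)$, and the purely $\hat U$-dependent term $\int\eta(\hat U)\,\dx$. The convex part is handled by the dissipation inequality in Definition \ref{defmvpcel}, which renders it non-increasing; the term $\int\eta(\hat U)\,\dx$ is constant because the Lipschitz solution satisfies the entropy identity $\partial_t\eta(\hat U)+\partial_\alpha q_\alpha(\hat U)=0$ pointwise and the flux is a divergence. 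For the affine part I would test the measure-valued equations \eqref{defmv1}--\eqref{defmv2} with the entropy variables $(\hat v,\partial_\Xi G(\hat\Xi))$ of the strong solution. The decisive structural input is the null-Lagrangian character of $\Phi$ together with the compatibility constraint \eqref{defmv3}, $\Xi=\Phi(\langle\nu,\lambda_F\rangle)$: the divergence identities for $\partial\Phi^A/\partial F_{i\alpha}$ make the flux contractions telescope exactly against the strong-solution equations, so that the remainder collapses to terms quadratic in the relative entropy, weighted by $\|\nabla\hat U\|_{L^\infty}$.

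Assembling these computations I expect a relative entropy inequality of the form $\Hcal(\tau)\le\Hcal(0)+C\int_0^\tau\Hcal(t)\,\dt$, with $C$ depending on $\|\nabla\hat U\|_{L^\infty}$ and the convexity constant, whence Gronwall's lemma yields $\Hcal(\tau)\le\Hcal(0)\,e^{C\tau}$. It remains to check $\Hcal(0)=0$: the hypothesis $(v(x,0),\Xi(x,0))=(\hat v(x,0),\Phi(\hat F(x,0)))$ makes the affine part vanish at $\tau=0$, while the energy admissibility at $t\to0^+$ bounds $\int\langle\nu_{0},\eta\rangle\,\dx+\gamma(0)$ by $\int\eta(\hat U(0))\,\dx$; combined with the non-negativity of the relative entropy this forces $\Hcal(0)=0$. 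Consequently $\Hcal(\tau)=0$ for a.e. $\tau$, and the quadratic lower bound together with $\gamma\ge0$ gives $\gamma=0$ and $\langle\nu,|\lambda-\hat U|^2\rangle=0$, i.e. $\nu_{(x,\tau)}=\delta_{(\hat v,\Phi(\hat F))}$ and $(v,\Xi)=(\hat v,\Phi(\hat F))$.

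The hardest step is controlling the concentration contributions in the flux terms, since $\eta$ has genuinely superlinear growth while the fluxes involve $\partial_\Xi G$. Here the growth hypotheses (H2)--(H3) and especially (H4)$'$ enter decisively: (H4)$'$ ensures that $|\partial_\Xi G|$ is $o(1)$ relative to the energy density as $|\Xi|\to\infty$, so the recession function of the relative flux is dominated by that of the relative entropy. This guarantees that the concentration measure produced in the weak formulation is absorbed into $\gamma$ and does not spoil the Gronwall estimate — the same mechanism by which, in the scalar setting of Section \ref{sec2}, the concentration measure is ultimately shown to vanish.
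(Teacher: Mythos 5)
The paper does not actually prove this theorem---it only states it and defers to Demoulini--Stuart--Tzavaras \cite{DeStTz,DeStTz2}---and your sketch follows exactly the relative-entropy strategy of that reference: a Gronwall estimate for the relative entropy of the enlarged symmetrizable system, with the null-Lagrangian identities for $\partial\Phi^A/\partial F_{i\alpha}$ cancelling the flux contractions against the strong solution and the growth hypothesis (H4)$'$ controlling the nonlinear flux terms by the energy density. Your outline is correct in its essentials; the one slight imprecision is in the last paragraph: (H4)$'$ guarantees that the fluxes grow strictly slower than $\eta$, so by \eqref{defym} they are represented by the Young measure with \emph{no} concentration part in the weak formulation at all (the concentration measure $\gamma$ lives only in the entropy inequality), rather than being ``absorbed into $\gamma$''.
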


%%%%%%%%%%%
%%%%%%%%%%
\section{Weak-strong uniqueness for general hyperbolic conservation laws}\label{sec5}
In \cite{CleoTz} the method of relative entropy is extended to a more general class of problems of hyperbolic and hyperbolic-parabolic type. In this section we describe the results of that paper.
%Consider a system of hyperbolic-parabolic conservation laws
%\begin{equation}
%\label{intro-hyppar}
%\begin{aligned}
%\partial_t A(u) + \partial_\alpha F_\alpha (u) = \eps \partial_\alpha ( B_{\alpha \beta} (u) \partial_\beta u)\;,
%\end{aligned}
%\end{equation}
%where $u(t,x)$ takes values in $\R^n$, $t \in \R^+$, $x \in \R^d$ and $A , F_\alpha : \R^n \to \R^n $ , $B_{\alpha \beta} : \R^n \to \R^{n %\times n}$ are given smooth functions with
%$\alpha, \beta = 1, ..., d$. It is assumed that the associated hyperbolic problem
Consider the following hyperbolic problem
\begin{equation}
\label{intro-hypcl}
\begin{aligned}
\partial_t A(u) + \partial_\alpha F_\alpha (u) = 0,
\end{aligned}
\end{equation}
where $u(t,x)$ takes values in $\R^n$, $t \in \R^+$, $x \in \R^d$ and $A , F_\alpha : \R^n \to \R^n $ are given smooth functions with $\alpha = 1,\cdots, d$. 
%, $B_{\alpha \beta} : \R^n \to \R^{n \times n}$ are given smooth functions with $\alpha, \beta = 1, ..., d$.
It is assumed that this system is symmetrizable. %in the sense of  Friedrichs and Lax \cite{FL71}.
The following hypotheses are assumed throughout.
\begin{enumerate}
 \item[($\text{H}_1$)] \emph{ $A:\R^n\to\R^n$ is a $C^2$ globally invertible map,}
 \item[($\text{H}_2$)] \emph{  existence of an entropy-entropy flux pair $(\eta,q)$, that is $\exists$ $G:\R^n\to\R^n$, $G=G(u)$ smooth such that
$$
\begin{aligned}
\nabla \eta &= G \cdot \nabla A
\\
\nabla q_\alpha &= G \cdot \nabla F_\alpha \, , \quad  \alpha = 1, ..., d \, ,
\end{aligned}
$$}
 \item[($\text{H}_3$)] \emph{ the symmetric matrix $\nabla^2 \eta (u) - G(u) \cdot  \nabla^2 A (u)$ is positive definite,}
\end{enumerate}
together with the following growth assumptions on the entropy $\eta(u)$, the functions $F_\alpha$ and $A$
\begin{equation}\label{hypetap}
\beta_1 (|u|^p+1)-B\le \eta(u)\le \beta_2(|u|^p+1) \qquad \text{for } u\in\mathbb{R}^n
\tag{A$_1$}
\end{equation}
for some positive constants $\beta_1$, $\beta_2$, $B$  and for some $p \in (1,\infty)$.
\begin{equation}\label{hypfgrowth}
\frac{|F_\alpha(u)|}{\eta(u)}=o(1)\qquad \text{as } |u|\to\infty,\quad\alpha=1,\dots,d \, ,
\tag{A$_2$}
\end{equation}
\begin{equation}\label{hypAgrowth}
\frac{|A(u)|}{\eta(u)}=o(1)\qquad \text{as } |u|\to\infty \, .
\tag{A$_3$}
\end{equation}
\begin{equation}\label{S2.2 A4}
\frac{1}{C}( |A(u)|^q + 1) - B \le \eta(u)\le C( |A(u)|^q + 1) ,\qquad q>1,
\tag{A$_4$}
\end{equation}
for some uniform constant $C>0$ and $B > 0$.%, which amounts  to control on the growth of $v = A(u)$.

\begin{definition}\label{defdissi}
A dissipative measure-valued solution $(u,\boldsymbol{\nu},\boldsymbol{\gamma})$ with concentration to~\eqref{intro-hypcl} consists of $u\in L^\infty(L^p)$, a Young measure $\boldsymbol{\nu}=({\boldsymbol{\nu}}_{(x,t)})_{\{(x,t)\in \bar{Q}_T\}}$ and a non-negative Radon measure $\boldsymbol{\gamma}\in\mathcal{M}^+(Q_T)$ such that
$u (x,t)  = \langle {\boldsymbol{\nu}}_{(x,t)} , \lambda \rangle$ and
\begin{equation}\label{dfmv1}
\iint\langle {\boldsymbol{\nu}}_{(x,t)}, A_i(\lambda)\rangle \partial_t\varphi_{i}\,dx\,dt+\iint \langle {\boldsymbol{\nu}}_{(x,t)}, F_{i,\alpha}(\lambda)\rangle \partial_\alpha\varphi_{i}dx\,dt+\int \langle\boldsymbol{\nu}_{(x,0)},A_i\rangle\varphi_i(x,0)dx=0
\end{equation}
for $i=1,\dots,n$ and any $\varphi\in \mathrm{C}^1_c(Q\times[0,T))$, and
\begin{equation}\label{dfmv2}
\iint\frac{d\xi}{dt} \left[\langle {\boldsymbol{\nu}}_{(x,t)}, \eta(\lambda)\rangle dxdt+\boldsymbol{\gamma}(dxdt)    \right]+\int \xi(0)\left[\langle {\boldsymbol{\nu}}_{(x,0)}, \eta\rangle dx+\boldsymbol{\gamma}_0(dx)  \right]\ge 0,
\end{equation}
 for all $\xi=\xi(t)\in \mathrm{C}^1_c([0,T))$ with $\xi\ge 0$.
\end{definition}
The following theorem provides weak measure-valued versus strong uniqueness in the $L^p$ framework for $1<p<\infty$.
\begin{theorem}
\label{thmweakstrong}
Suppose that $(H1)-(H3)$ hold, the growth properties~\eqref{hypetap}--\eqref{S2.2 A4} are satisfied,  and the entropy  $\eta(u) \ge 0$. 
Assume that $(u,\boldsymbol{\nu}, \boldsymbol{\gamma})$ is a dissipative measure-valued solution,  $u = \langle \boldsymbol{\nu}_{(x,t)} , \lambda \rangle$,  and  $\bar{u}\in W^{1,\infty}(\overline{Q_T})$ 
is a strong solution to~\eqref{intro-hypcl}. Then, if the initial data 
satisfy $\boldsymbol{\gamma}_0=0$ and ${\boldsymbol{\nu}_0}_x=\delta_{\bar{u}_0}(x)$, then $\boldsymbol{\nu}=\delta_{\bar{u}}$ and $u=\bar{u}$ almost everywhere on $Q_T$.
\end{theorem}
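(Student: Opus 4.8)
The plan is to carry out a relative entropy (with concentration) argument of Dafermos--DiPerna type, exploiting the symmetrizable structure $(\text{H}_1)$--$(\text{H}_3)$. First I would introduce the relative entropy and relative entropy flux associated to the strong solution $\bar u$:
\[
\eta(\lambda \mid \bar u) = \eta(\lambda) - \eta(\bar u) - G(\bar u)\cdot\big(A(\lambda) - A(\bar u)\big),
\]
\[
q_\alpha(\lambda\mid\bar u) = q_\alpha(\lambda) - q_\alpha(\bar u) - G(\bar u)\cdot\big(F_\alpha(\lambda) - F_\alpha(\bar u)\big).
\]
Because $\nabla\eta = G\cdot\nabla A$, the map $\lambda\mapsto \eta(\lambda) - G(\bar u)\cdot A(\lambda)$ has vanishing gradient at $\lambda=\bar u$ and Hessian $\nabla^2\eta(\bar u)-G(\bar u)\cdot\nabla^2 A(\bar u)$ there, which is positive definite by $(\text{H}_3)$. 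Hence $\eta(\lambda\mid\bar u)\ge 0$ with equality only at $\lambda=\bar u$, and near $\bar u$ it is comparable to $|\lambda-\bar u|^2$. This strict coercivity is exactly what will eventually force $\boldsymbol\nu=\delta_{\bar u}$.

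Next I would set up the relative entropy functional carrying the concentration defect,
\[
\mathcal H(\tau) = \int_{\R^d} \big\langle \boldsymbol\nu_{(x,\tau)},\, \eta(\lambda\mid\bar u(x,\tau))\big\rangle\,dx + \boldsymbol\gamma_\tau,
\]
where $\boldsymbol\gamma_\tau$ denotes the concentration defect at time $\tau$; here one uses that $(\mathrm{A}_2)$--$(\mathrm{A}_3)$ give $|A(\lambda)|,|F_\alpha(\lambda)|=o(\eta(\lambda))$, so the sublinear terms in $\eta(\cdot\mid\bar u)$ carry no recession part and the only concentration attached to $\eta(\lambda\mid\bar u)$ is the measure $\boldsymbol\gamma$ generated by $\eta$ itself. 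The core computation estimates $\mathcal H(\tau)-\mathcal H(0)$ by combining three inputs: the measure-valued entropy inequality~\eqref{dfmv2}; the balance law~\eqref{dfmv1} tested against the conjugate field $\varphi=G(\bar u)\,\xi(t)$; and the identities $\partial_t\eta(\bar u)+\partial_\alpha q_\alpha(\bar u)=0$ and $\partial_t A(\bar u)+\partial_\alpha F_\alpha(\bar u)=0$ satisfied by the strong solution. Testing with a Lipschitz approximation $\xi$ of $\chi_{[0,\tau]}$ and passing to the limit on the a.e.-defined time slices, the exact cancellations (the absolute entropy balance and the transport of $A$ and $F_\alpha$) collapse the algebra to a single production term of the form
\[
-\int_0^\tau\!\!\int_{\R^d} \nabla G(\bar u)\,\partial_\alpha \bar u \cdot \big\langle\boldsymbol\nu_{(x,t)},\, \mathcal F_\alpha(\lambda\mid\bar u)\big\rangle\,dx\,dt,
\qquad
\mathcal F_\alpha(\lambda\mid\bar u) = \big(F_\alpha(\lambda)-F_\alpha(\bar u)\big) - \nabla F_\alpha(\bar u)\,\nabla A(\bar u)^{-1}\big(A(\lambda)-A(\bar u)\big).
\]

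The decisive estimate is that this production term is controlled by $\mathcal H$. For $\lambda$ near $\bar u$ a Taylor expansion shows $\mathcal F_\alpha(\lambda\mid\bar u)=O(|\lambda-\bar u|^2)$, hence $\lesssim\eta(\lambda\mid\bar u)$ by $(\text{H}_3)$; for large $\lambda$ the growth hypotheses $(\mathrm{A}_1)$--$(\mathrm{A}_4)$ give $|\mathcal F_\alpha(\lambda\mid\bar u)|=o(\eta(\lambda))\lesssim\eta(\lambda\mid\bar u)+C$. Using $\bar u\in W^{1,\infty}$ to bound $\nabla G(\bar u)\,\partial_\alpha\bar u$, the whole term is dominated by $C(T,\|\bar u\|_{W^{1,\infty}})\,\mathcal H(t)$, and since the recession of $\mathcal F_\alpha$ relative to $\eta$ vanishes, the concentration measure feeds nothing into the production — only the $\boldsymbol\gamma$ already inside $\mathcal H$ survives. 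This yields $\mathcal H(\tau)\le \mathcal H(0)+C\int_0^\tau \mathcal H(t)\,dt$. As $\boldsymbol\gamma_0=0$ and $\boldsymbol\nu_0=\delta_{\bar u_0}$ give $\eta(\lambda\mid\bar u_0)=0$ and hence $\mathcal H(0)=0$, Gronwall's lemma forces $\mathcal H\equiv 0$. Then $\boldsymbol\gamma=0$ and $\langle\boldsymbol\nu_{(x,\tau)},\eta(\lambda\mid\bar u)\rangle=0$ a.e., and strict coercivity of $\eta(\cdot\mid\bar u)$ gives $\boldsymbol\nu=\delta_{\bar u}$ and $u=\langle\boldsymbol\nu,\lambda\rangle=\bar u$ a.e. on $Q_T$.

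The main obstacle is precisely this uniform-in-$\lambda$ control of $\mathcal F_\alpha(\lambda\mid\bar u)$ at the level of the generalized Young measure: one must simultaneously use the local positive definiteness $(\text{H}_3)$ for quadratic decay near $\bar u$ and the sublinear growth $(\mathrm{A}_2)$--$(\mathrm{A}_3)$ to dominate it by the entropy at infinity, while verifying that the recession of $\mathcal F_\alpha$ relative to $\eta$ is identically zero so concentrations never re-enter the production. A secondary technical point is that the conjugate field $G(\bar u)$ used to test~\eqref{dfmv1} is bounded but not compactly supported on $\R^d$; this is handled through the boundedness of $G(\bar u)$ together with finite-propagation or spatial cutoff arguments, using the finiteness of the relative entropy to pass to the limit.
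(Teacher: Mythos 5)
First, a point of reference: the paper does not actually prove Theorem \ref{thmweakstrong} --- Section \ref{sec5} is a survey and states the result as quoted from Christoforou and Tzavaras \cite{CleoTz}. Measured against the argument of that source, your outline follows the intended route: the relative quantities $\eta(\lambda\mid\bar u)$, $q_\alpha(\lambda\mid\bar u)$ and the relative flux $\mathcal F_\alpha(\lambda\mid\bar u)=F_\alpha(\lambda)-F_\alpha(\bar u)-\nabla F_\alpha(\bar u)\nabla A(\bar u)^{-1}\big(A(\lambda)-A(\bar u)\big)$ are exactly the objects used there; combining \eqref{dfmv2} with \eqref{dfmv1} tested against $G(\bar u)\xi(t)$ is the correct mechanism; and your observation that (A$_2$)--(A$_3$) prevent concentrations from entering anywhere except through $\boldsymbol{\gamma}$ is precisely why Definition \ref{defdissi} carries no concentration measure in \eqref{dfmv1}.

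Two steps need repair. (i) The claim ``$\eta(\lambda\mid\bar u)\ge 0$ with equality only at $\lambda=\bar u$'' does not follow from positive definiteness of the Hessian at the single point $\lambda=\bar u$; that is purely local information. The correct justification passes to the variable $v=A(u)$ (admissible globally by (H$_1$)) and notes that (H$_3$) says exactly that $\eta\circ A^{-1}$ is strictly convex in $v$, so $\eta(\lambda\mid\bar u)$ is its Bregman divergence, hence globally nonnegative and vanishing only when $A(\lambda)=A(\bar u)$, i.e.\ $\lambda=\bar u$. (ii) More seriously, your production estimate $|\mathcal F_\alpha(\lambda\mid\bar u)|\lesssim\eta(\lambda\mid\bar u)+C$ with an additive constant does not close the Gronwall argument: it yields $\mathcal H(\tau)\le \mathcal H(0)+C\tau+C\int_0^\tau\mathcal H(t)\,dt$ and hence only $\mathcal H(\tau)\le C\tau e^{C\tau}$, which is not zero. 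What is needed, and what (A$_1$)--(A$_4$) actually deliver, is the homogeneous bound $|\mathcal F_\alpha(\lambda\mid\bar u)|\le C\,\eta(\lambda\mid\bar u)$ with no additive constant: the quadratic vanishing of both sides near $\lambda=\bar u$ handles a compact set containing the range of $\bar u$, while for $|\lambda|\ge R$ the lower bound in (A$_1$) together with the sublinearity (A$_3$) forces $\eta(\lambda\mid\bar u)\ge \tfrac12\eta(\lambda)-C\ge c>0$ uniformly in $\bar u$, so the constant can be absorbed into $\eta(\lambda\mid\bar u)$ in that regime. With that correction Gronwall gives $\mathcal H\equiv0$ and the conclusion follows as you describe.
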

\begin{remark}
The work of Christoforou and Tzavaras \cite{CleoTz} is a generalization of the results presented in \cite{BrDLSz}, where the case $A(u)=u$ was considered and concentration effects were ignored.
\end{remark}

%%%%%%%%%%%
%%%%%%%%%%%
\section{General relative entropy method in mathematical biology}\label{sec6}
In this section we give a short overview of an extension of relative entropy method to linear PDEs which was introduced in the context of biological systems in \cite{MiPeRy} and further developed in \cite{MiMiPe} and \cite{MiMiPe2}. It was extended to measure solutions with measure initial data in \cite{GwiWie}. Here the relative entropies are a family of renormalizations to the initial linear problem, obtained by multiplying the original equation by a nonlinear function.\\
The notation used in this section is intentionally inconsistent with the remainder of this paper -- however it is consistent with the notation used in the above mentioned papers on the subject.\\ 
Following \cite{Pert} and \cite{GwiWie} we consider the McKendrick-Von Foerster equation 
\begin{equation}\label{originalrenewal}
\begin{aligned}
\partial_t n(t,x)+\partial_x n(t,x)&=0\hspace{0.3cm}\text{on $(\Rbb^+)^2$},\\
n(t,x=0)&=\int_0^\infty B(y) n(t,y)dy,\\
n(x,t=0)&=n^0(x).
\end{aligned}
\end{equation}
Here, $n(x,t)$ denotes the population density at time $t$ with age $x$, and $B\in L^\infty(\Rbb^+;\Rbb^+)$ is a birth rate such that  
\begin{equation*}
\int_0^\infty B(x)dx>1.
\end{equation*}
The associated primal and dual eigenvalue problems have the form
%Under these assumptions, it can be shown that there exist uniquely determined solutions of the primal and dual eigenvalue problems,
\begin{equation}\label{primal}
\begin{aligned}
\partial_x N(x)+\lambda_0 N(x)&=0,\hspace{0.2cm}x\geq0,\\
N(0)&=\int_0^\infty B(y)N(y)dy,\\
N>0,&\hspace{0.3cm}\int_0^\infty N(x)dx=1
\end{aligned}
\end{equation}
and
\begin{equation*}
\begin{aligned}
-\partial_x\phi(x)+\lambda_0\phi(x)=\phi(0)B(x),\hspace{0.2cm}x\geq0,\\
\phi\geq0,&\hspace{0.3cm}\int_0^\infty N(x)\phi(x)dx=1,
\end{aligned}
\end{equation*}
where $\lambda_0>0$.
Under the above assumptions both these problems posses a unique solution.
In fact, it can be seen that the solution of \eqref{primal} is given by $N(x)=\lambda_0 e^{-\lambda_0x}$. Since the death rate is ignored and the birth rate integrates to more than one, an exponential growth of the population is expected. In order to quotient out this growth, we set 
\begin{equation*} 
\n(t,x)=n(t,x)e^{-\lambda_0t}.
\end{equation*}
Then \eqref{originalrenewal} becomes
\begin{equation}\label{renewal}
\begin{aligned}
\partial_t\n(t,x)+\partial_x\n(t,x)+\lambda_0 \n(t,x)&=0\hspace{0.3cm}\text{on $(\Rbb^+)^2$},\\
\n(t,x=0)&=\int_0^\infty B(y)\n(t,y)dy,\\
\n(x, t=0)&=n^0(x).
\end{aligned}
\end{equation}
It can be shown that ~\eqref{renewal} has a unique solution in the weak sense for any $n^0\in\mathcal{M}^+([0,\infty))$, see \cite{GLMC}. The following results on long-time asymptotics of this solution are proven in \cite{GwiWie}.
\begin{theorem}\label{expasympthm}
Let $n^0\in\mathcal{M}^+([0;\infty))$. Then there is $y_0>0$, $\sigma>0$ and a bounded function $\eta$, positive on $\supp\phi$, such that the solution of the renewal equation satisfies
\begin{equation}\label{expasymp}
\int_0^\infty \eta(x)d|\n(t,x)-m_0N(x)dx|\leq e^{-\sigma(t-y_0)} \int_0^\infty \eta(x)d|\n^0(x)-m_0N(x)dx|,
\end{equation}
where $m_0=\int_0^\infty\phi(x)\mathrm{d}n^0(x)$.
\end{theorem}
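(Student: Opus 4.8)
The plan is to run the General Relative Entropy (GRE) argument, adapted to measure data, reducing the statement to an exponential contraction estimate for a weighted total variation of the deviation from the asymptotic profile $m_0N$. The first ingredient is \emph{conservation of $\phi$-mass}: testing \eqref{renewal} against the dual eigenfunction $\phi$, integrating by parts, and using the renewal boundary condition together with the dual relation $-\partial_x\phi+\lambda_0\phi=\phi(0)B$, all boundary and zeroth-order contributions cancel, so that $t\mapsto\int_0^\infty\phi(x)\,\mathrm{d}\n(t,x)$ is constant, equal to $m_0=\int_0^\infty\phi\,\mathrm{d}n^0$. Since $N$ solves \eqref{primal} it is a stationary solution of \eqref{renewal}, and with the normalisation $\int_0^\infty\phi N\,\mathrm{d}x=1$ the signed measure $u(t):=\n(t,\cdot)-m_0N\,\mathrm{d}x$ solves the same linear problem and carries zero $\phi$-mass, $\int_0^\infty\phi\,\mathrm{d}u(t)=0$ for all $t$. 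The theorem thereby reduces to exponential decay of $\int_0^\infty\eta\,\mathrm{d}|u(t)|$.

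Next I would establish the \emph{entropy contraction}. For convex $H$ the GRE principle gives that $\int_0^\infty\phi N\,H(\n/N)\,\mathrm{d}x$ is non-increasing along solutions; the formal choice $H(s)=|s-m_0|$ yields monotonicity of $\int_0^\infty\phi\,\mathrm{d}|\n(t,\cdot)-m_0N\,\mathrm{d}x|$. Because $\n/N$ is unavailable at the level of measures, I would instead justify a Kato-type inequality for the signed measure $u(t)$ directly: its positive and negative parts are carried by the same characteristics $x\mapsto x+t$ with damping factor $e^{-\lambda_0 t}$, while at the boundary $|u(t,0)|=\bigl|\int_0^\infty B\,\mathrm{d}u(t)\bigr|\le\int_0^\infty B\,\mathrm{d}|u(t)|$; combining these along the flow, via the explicit Duhamel/characteristic representation or by doubling of variables, shows that $V(t):=\int_0^\infty\eta\,\mathrm{d}|u(t)|$ is non-increasing whenever $\eta$ is comparable to $\phi$. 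This step alone delivers only a contraction with rate zero.

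The heart of the matter is to upgrade monotonicity to \emph{strict contraction over a delay}. Heuristically, the initial datum contributes to $u(t,x)$ only through the damped term at ages $x>t$, whereas at ages $x<t$ the solution is entirely regenerated by the renewal boundary. Hence once $t$ exceeds the time $y_0$ needed to flush the initial data out of $\supp\phi$ — a quantity governed by $\supp B$ and the support of $\phi$ — the restriction of $u(t)$ to that support is driven purely by the renewal feedback $u(t,0)=\int_0^\infty B\,\mathrm{d}u(t)$. The zero-mass constraint $\int_0^\infty\phi\,\mathrm{d}u=0$ then forces cancellation between the positive and negative parts of $u$ in this feedback, so that a definite proportion of the weighted variation is destroyed over each subsequent window of length $y_0$, giving a one-step factor $e^{-\sigma y_0}<1$, that is $V(t+y_0)\le e^{-\sigma y_0}V(t)$. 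The bounded weight $\eta$, positive on $\supp\phi$, is constructed precisely so that this loss can be measured uniformly, replacing $\phi$ by a suitable truncation when $\phi$ itself fails to be bounded; during the initial layer $[0,y_0]$ only the bare monotonicity is used, which is exactly why the rate in \eqref{expasymp} appears as $e^{-\sigma(t-y_0)}$. Iterating the one-step contraction over consecutive windows then yields the stated estimate.

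The measure-valued initial datum causes no extra difficulty, since every manipulation is carried out in the weak$^*$ (duality) formulation. I expect the two main obstacles to be, first, the rigorous justification of the Kato inequality and of the monotonicity of $V$ at the level of signed measures rather than densities, and second — the genuinely hard point — the quantitative extraction of a strictly positive rate $\sigma$ from the renewal/cancellation mechanism, since this is what separates true exponential relaxation from the bare, rate-free GRE monotonicity.
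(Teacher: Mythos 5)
First, a point of reference: the paper you were given does not actually prove Theorem \ref{expasympthm}; it is stated as a survey item and the proof is delegated to \cite{GwiWie}, which in turn adapts the construction in \cite{Pert}. So the comparison below is against that source. Your setup is on the right track: conservation of the $\phi$-mass $\int_0^\infty\phi\,\mathrm{d}\n(t,\cdot)$, the observation that $u(t)=\n(t,\cdot)-m_0N\,\mathrm{d}x$ solves the same linear problem with zero $\phi$-mass, and a Kato-type inequality
\[
\partial_t |u|+\partial_x|u|+\lambda_0|u|\le 0,\qquad |u|(t,0)\le\int_0^\infty B\,\mathrm{d}|u(t)|,
\]
justified at the level of measures, are exactly the ingredients used there.

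The genuine gap is in your third step. The actual proof does not proceed by flushing the initial layer and then extracting a per-window loss factor from a cancellation forced by the zero-$\phi$-mass constraint; that mechanism, as you describe it, is unquantified (you never say how much mass must cancel, and a lower bound on the cancellation would seem to require a positivity hypothesis of the type $B\ge C\phi$, which the paper only imposes in Theorem \ref{alternative}, where it yields rate-free convergence, not the exponential rate). Instead, the whole content of Theorem \ref{expasympthm} is the explicit construction of the weight $\eta$: one builds a bounded function $\eta$, positive on $\supp\phi$, satisfying a strict perturbation of the dual eigenvalue equation, schematically
\[
\partial_x\eta(x)-\lambda_0\eta(x)+\eta(0)B(x)\le-\sigma\,\eta(x)
\]
in the appropriate range of $x$ (recall that $\phi$ itself gives equality with $\sigma=0$, so this is a genuine spectral-gap statement, and the parameters $y_0$ and $\sigma$ come out of the backward integration of this differential inequality against $B$). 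Once such an $\eta$ is in hand, pairing it with the Kato inequality gives directly $\frac{\mathrm{d}}{\mathrm{d}t}\int\eta\,\mathrm{d}|u(t)|\le-\sigma\int\eta\,\mathrm{d}|u(t)|$ up to the shift $y_0$, and Gronwall concludes; no iteration over windows and no use of the zero-$\phi$-mass constraint is needed for this inequality. You correctly identify the extraction of $\sigma>0$ as ``the genuinely hard point,'' but you leave it as a heuristic, and the heuristic you offer is not the one that works; the missing idea is the construction of $\eta$ as a strict supersolution of the dual problem.
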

\begin{theorem}\label{alternative}
Assume in addition there exists $C>0$ such that $B(x)\geq C\phi(x)$.
Let $n^0\in\mathcal{M}^+([0;\infty))$. Then the solution of the renewal equation satisfies
\begin{equation}\label{asymp}
\lim_{t\to\infty}\int_0^\infty \phi(x)d|\n(t,x)-m_0N(x)dx|=0,
\end{equation}
where $m_0=\int_0^\infty\phi(x)dn^0(x)$.
\end{theorem}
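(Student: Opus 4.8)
The plan is to run the General Relative Entropy method directly on the total-variation entropy (the choice $H(u)=|u|$), exploiting that $m_0N(x)\,\mathrm{d}x$ is itself a steady state of the renormalized equation~\eqref{renewal}. First I would pass to the signed measure $\mu(t,\cdot)\coloneqq\n(t,\cdot)-m_0N\,\mathrm{d}x$: since $N$ solves \eqref{primal}, both the transport part $N'+\lambda_0N=0$ and the boundary condition $N(0)=\int_0^\infty BN\,\mathrm{d}y$ hold, so $m_0N\,\mathrm{d}x$ is a stationary solution of~\eqref{renewal}; by linearity $\mu$ solves \eqref{renewal} with datum $n^0-m_0N\,\mathrm{d}x$. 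Testing \eqref{renewal} against the time-independent weight $\phi$ and invoking the dual eigenvalue problem (which gives $\phi'-\lambda_0\phi=-\phi(0)B$) shows, after integration by parts and use of the renewal boundary condition, that $t\mapsto\int_0^\infty\phi\,\mathrm{d}\n(t,\cdot)$ is conserved and equals $m_0$; together with the normalization $\int_0^\infty\phi N\,\mathrm{d}x=1$ this yields the mean-zero identity $\int_0^\infty\phi\,\mathrm{d}\mu(t,\cdot)=0$ for every $t$. The goal then becomes to prove $F(t)\coloneqq\int_0^\infty\phi\,\mathrm{d}|\mu(t,\cdot)|\to0$.

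Next I would establish the entropy-dissipation identity. Writing formally $g=\mathrm{d}\mu/(N\,\mathrm{d}x)$ and using $N'=-\lambda_0N$, one checks that $g$ solves the pure transport equation $\partial_tg+\partial_xg=0$, while the renewal condition becomes $g(t,0)=\frac{1}{N(0)}\int_0^\infty B(y)N(y)g(t,y)\,\mathrm{d}y$, i.e.\ a $BN$-weighted average of $g(t,\cdot)$. Since $(\phi N)'=-\phi(0)BN$, for any convex $H$ an integration by parts gives
\begin{equation*}
\frac{\mathrm{d}}{\mathrm{d}t}\int_0^\infty\phi N\,H(g)\,\mathrm{d}x=\phi(0)N(0)\,H\!\big(g(t,0)\big)-\phi(0)\int_0^\infty BN\,H(g)\,\mathrm{d}x\le0,
\end{equation*}
the sign coming from Jensen's inequality applied to the averaging formula for $g(t,0)$. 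Choosing $H(u)=u^{+}$ and $H(u)=u^{-}$ and setting $P=\int_0^\infty\phi\,\mathrm{d}\mu^{+}$, $M=\int_0^\infty\phi\,\mathrm{d}\mu^{-}$, $a=\int_0^\infty B\,\mathrm{d}\mu^{+}$, $b=\int_0^\infty B\,\mathrm{d}\mu^{-}$, the boundary term collapses and one finds $\frac{\mathrm{d}}{\mathrm{d}t}P=\frac{\mathrm{d}}{\mathrm{d}t}M=-\phi(0)\min(a,b)$. The mean-zero condition forces $P\equiv M$, so $F=P+M=2P$ is non-increasing with dissipation $D=2\phi(0)\min(a,b)$.

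The convergence $F(t)\to0$ is exactly where the hypothesis $B\ge C\phi$ enters. Let $L\coloneqq\lim_{t\to\infty}F(t)\ge0$. Integrating the dissipation gives $\int_0^\infty\min(a,b)\,\mathrm{d}t=\frac{F(0)-L}{2\phi(0)}<\infty$. On the other hand, since $\mu^{\pm}\ge0$ and $B\ge C\phi$ pointwise, one has the bounds $a=\int_0^\infty B\,\mathrm{d}\mu^{+}\ge C\int_0^\infty\phi\,\mathrm{d}\mu^{+}=CP$ and $b\ge CM$, valid for \emph{all} $t$; as $P=M$ decreases to $L/2$ we have $P,M\ge L/2$, hence $\min(a,b)\ge CL/2$ for every $t$. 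A non-negative, time-integrable function bounded below by $CL/2$ can exist only if $L=0$. Therefore $\int_0^\infty\phi\,\mathrm{d}|\mu(t,\cdot)|\to0$, which is precisely~\eqref{asymp}.

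I expect the main obstacle to be making the above rigorous at the level of measures: $\n(t,\cdot)$, and hence $\mu(t,\cdot)$, are only finite measures, so the density $g$ need not exist and $F$ cannot be differentiated naively. I would handle this exactly as in Section~\ref{sec2}: mollify $\mu$ in $(t,x)$, prove the dissipation inequality for the smooth approximants (where $\mu^{\pm}$ and the quantities $P,M,a,b$ are unproblematic), and pass to the limit. The delicate points are the boundary term at $x=0$ -- where the renewal condition requires controlling the trace of a possibly singular measure and justifying that the Hahn decomposition $\mu=\mu^{+}-\mu^{-}$ is transported along characteristics up to its mixing at $x=0$ -- and the fact that $\phi$, although bounded, is not compactly supported, so its use as a test function in the conservation of $\int_0^\infty\phi\,\mathrm{d}\mu$ needs a cutoff argument, justified by the decay of $N$ and the well-posedness theory of~\cite{GLMC} for measure data.
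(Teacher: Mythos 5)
First, a point of reference: the paper itself does not prove Theorem \ref{alternative} --- Section \ref{sec6} is a survey and the result is quoted from \cite{GwiWie} --- so your proposal can only be compared with the argument in that reference (and in \cite{Pert}). Your formal skeleton is exactly the General Relative Entropy computation used there: the identities $(\phi N)'=-\phi(0)BN$, the conservation of $t\mapsto\int_0^\infty\phi\,\mathrm{d}\n(t,\cdot)$ (hence $P\equiv M$), and $\frac{\mathrm{d}}{\mathrm{d}t}P=\frac{\mathrm{d}}{\mathrm{d}t}M=-\phi(0)\min(a,b)$ all check out, and Jensen applied to the $BN$-weighted average is indeed the source of the sign. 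Note, however, that your formal argument proves more than you claim: from $a\ge CP$, $b\ge CM$ and $P=M=F/2$ you get $\min(a,b)\ge CF/2$, hence $F'\le -C\phi(0)F$ and $F(t)\le F(0)e^{-C\phi(0)t}$ by Gronwall. The detour through $\int_0^\infty\min(a,b)\,\mathrm{d}t<\infty$ is therefore unnecessarily weak given what you have already established; you should either state the exponential rate or explain why it is lost in the passage to measure data.

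The genuine gap is the one you flag but defer: making the computation rigorous for $n^0\in\mathcal{M}^+([0,\infty))$. The fix you propose --- mollify in $(t,x)$ ``exactly as in Section \ref{sec2}'' --- does not transfer. Section \ref{sec2} works on the torus precisely to avoid boundary issues, whereas here the entire dissipation mechanism lives at $x=0$ through the nonlocal renewal condition; convolution in $x$ near the boundary neither commutes with that condition nor produces an approximate solution of \eqref{renewal}, and the Hahn decomposition does not commute with mollification, so the key identity $\frac{\mathrm{d}}{\mathrm{d}t}P=-\phi(0)\min(a,b)$ is not inherited by the regularized objects. The route actually used in \cite{GwiWie}, building on the well-posedness theory of \cite{GLMC}, is the explicit characteristics representation: on $\{x\ge t\}$ the solution is $e^{-\lambda_0 t}$ times the pushforward of $n^0$ under translation by $t$, while on $\{x<t\}$ it is absolutely continuous with density $b(t-x)e^{-\lambda_0 x}$ for a locally bounded boundary function $b$. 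Since $e^{-\lambda_0 t}N(x-t)=N(x)$, this representation shows directly that $\mu^{\pm}(t)$ restricted to $\{x\ge t\}$ is the transported $\mu^{\pm}(0)$, gives meaning to the trace at $x=0$ without any smoothing, and closes every one of the ``delicate points'' you list. Without it (or an equivalent device) your proof is a correct formal computation but not a complete argument.
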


\section{Acknowledgement}
This work was partially supported by the Simons -- Foundation grant 346300 and the Polish Government MNiSW 2015-2019 matching fund.
T.D acknowledges the support of the National Science Centre, DEC-2012/05/E/ST1/02218. The research was partially supported by the Warsaw
Center of Mathematics and Computer Science. P.G and A.\'S-G received support from the National Science Centre (Poland), 2015/18/M/ST1/00075.

%%%%%%%%%%%%%%%%%%%%%%%%%%%%%%%%%%%%%%%%%%%%%%%%%%%%%%%%%%%%%%%%%%%%%%%%%%%%%%%
\def\cprime{$'$} \def\ocirc#1{\ifmmode\setbox0=\hbox{$#1$}\dimen0=\ht0
  \advance\dimen0 by1pt\rlap{\hbox to\wd0{\hss\raise\dimen0
  \hbox{\hskip.2em$\scriptscriptstyle\circ$}\hss}}#1\else {\accent"17 #1}\fi}

\end{document}